\documentclass[confernece]{IEEEtran}

\usepackage{amsfonts} 
\usepackage{amsmath}
\usepackage{amssymb}
\usepackage{amsthm}
\usepackage{xcolor}
\usepackage{color}
\usepackage{algpseudocode}
\usepackage{algorithm}
\usepackage{nicefrac}
\usepackage{comment}
\usepackage{makeidx}
\usepackage{graphicx}
\usepackage{float}
\usepackage{mathtools}
\usepackage{commath}
\usepackage{dsfont}
\usepackage{microtype}
\usepackage{subcaption}


\newcommand{\R}{\mathbb{R}}
\newcommand{\E}{\mathbb{E}}
\newcommand{\C}{\mathbb{C}}
\newcommand{\I}{\mathbb{I}}

\newcommand{\Z}{\mathbb{Z}}
\newcommand{\fft}{\mathcal{F}}
\newcommand{\ifft}{\mathcal{F}^{-1}}
\newcommand{\fftt}{\mathcal{F}_{2}}
\newcommand{\ifftt}{\mathcal{F}_{2}^{-1}}
\newcommand{\diag}{\textnormal{diag}}

\newcommand{\est}{\textnormal{est}}
\newcommand{\BC}{\textnormal{BC}}  

\newcommand{\vecb}{\mathbf{vec}}
\newcommand{\mat}{\mathbf{mat}}

\newcommand{\SO}{\mathsf{SO}}
\newcommand{\hx}{\hat{x}}
\newcommand{\hy}{\hat{y}}
\newcommand{\hrho}{\hat{\rho}}

\newcommand{\tx}{\tilde{x}}

\newcommand{\vopt}{v_{\text{opt}}}
\newcommand{\tvopt}{\tilde{v}_\text{opt}}
\newcommand{\hvopt}{\hat{v}_\text{opt}}

\newcommand{\Vopt}{V_{\text{opt}}}
\newcommand{\tVopt}{\tilde{V}_\text{opt}}

\DeclareMathOperator*{\argmax}{argmax}
\DeclareMathOperator*{\argmin}{argmin}

\newtheorem{theorem}{Theorem}[section]
\newtheorem{prop}[theorem]{Proposition}
\newtheorem{coro}[theorem]{Corollary}
\newtheorem{lemma}[theorem]{Lemma}

\numberwithin{equation}{section}


\begin{document}

\title{Provable algorithms for multi-reference alignment over $\SO(2)$}

\author{Gil Drozatz,~\IEEEmembership{}
        Tamir Bendory,~\IEEEmembership{Senior Member,~IEEE,}
        and Nir Sharon
\thanks{G. Drozatz and T. Bendory are with the School
of Electrical and Computer Engineering, Tel Aviv University, Israel. N. Sharon is with the Department of Applied Mathematics, Tel Aviv University, Israel. The research was supported in part by NSF-BSF under Grant 2019752. 
T.B. is also supported in part by BSF under Grant 2020159, in part by ISF under Grant 1924/21, and in part by a grant from The Center for AI and Data Science at Tel Aviv University (TAD). N.S. is also partially supported by the DFG award 514588180. 
}}

%
\maketitle

\begin{abstract}
	The multi-reference alignment (MRA) problem involves reconstructing a signal from multiple noisy observations, each transformed by a random group element. In this paper, we focus on the group \(\mathrm{SO}(2)\) of in-plane rotations and propose two computationally efficient algorithms with theoretical guarantees for accurate signal recovery under a non-uniform distribution over the group. 
	The first algorithm exploits the spectral properties of the second moment of the data, while the second utilizes the frequency marching principle. Both algorithms achieve the optimal estimation rate in high-noise regimes, marking a significant advancement in the development of computationally efficient and statistically optimal methods for estimation problems over groups.
\end{abstract}

\begin{IEEEkeywords}
Multi-reference alignment, spectral algorithm, frequency marching, method of moments
\end{IEEEkeywords}

\IEEEpeerreviewmaketitle

\section{Introduction}
Let \( G \) be the group of 2-D rotations, \( \SO(2) \), acting on a finite-dimensional vector space \( V \). We aim to recover a signal \( x \in V \) from \( n \) observations \( y_1, \ldots, y_n \) of the form:
\begin{equation} \label{eqn:mra}
    y_i = g_i \cdot x + \varepsilon_i, \quad i = 1, \ldots, n,
\end{equation}
where \( \cdot \) denotes the group action, \( g_i \in G \) are unknown random group elements, and $\varepsilon_i$ is a noise term. 
We assume that the group elements are drawn from a nonuniform distribution over \( \SO(2) \).
This paper studies the action of the group $\SO(2)$ on two vector spaces: bandlimited 1-D signals and bandlimited 2-D images, as detailed in Sections~\ref{sec:1D_continuous_model} and~\ref{sec:2D_continuous_model}.
We also assume that 
\( \varepsilon_i \overset{\text{i.i.d.}}{\sim} \mathcal{N}(0, \sigma^2 I) \) over the vector of coefficients in the appropriate basis of $V$ (as detailed later). 

 The studied model is a special case of the multi-reference alignment (MRA) problem, in which \( \SO(2) \) is replaced by other compact groups acting on finite-dimensional vector spaces, as introduced in \cite{bandeira2014multireference,bendory2017bispectrum,bandeira2023estimation,bendory2024transversality}.
The primary motivation for studying the MRA model is the transformative technology of single-particle cryo-electron microscopy (cryo-EM) to elucidate the spatial structure of biological molecules~\cite{scheres2012relion,cheng2018single,bendory2020single}.
Since the observations are invariant under an intrinsic group of symmetries, it is impossible to distinguish \( x \) from \( g \cdot x \) for any fixed \( g \in G \). Thus, the goal is to estimate the \( G \)-orbit of the signal: \( \{g \cdot x \mid g \in G\} \). 

To estimate the signal, we propose a two-stage framework based on the classical method of moments. First, we approximate the first two population moments using the empirical moments of the observations. This approximation is accurate if the number of observations is sufficiently large \( n \gg \sigma^4 \). 
Next, we aim to determine the parameters that represent the signal based on the estimated moments. Previous works have demonstrated that in the high-noise regime, the sample complexity of the MRA problem is governed by the lowest-order moment that uniquely determines the signal's orbit~\cite{abbe2018multireference, perry2019sample}. Since recovery from the first moment is impossible, recovery from the second moment implies that the sample complexity of the model in \eqref{eqn:mra} is proportional to \( \sigma^4 \).

The main computational challenge in the method of moments is estimating the signal from the moments. To address this, we develop two methods.
The first method employs a frequency marching approach, extending a similar technique used in discrete settings~\cite{bendory2017bispectrum}. We prove that exact recovery is possible from the first two population moments, which implies the sample complexity of the model. This proof is constructive, as it introduces an explicit computationally efficient algorithm.
The second method utilizes the spectral decomposition of an approximation of the second-moment matrix. This algorithm generalizes the spectral method used in discrete 1-D MRA~\cite{abbe2018multireference}, and we refer to it as the spectral algorithm. This algorithm \emph{approximates} the solution based on the spectral properties of the second moment matrix; see  Theorem~\ref{thm:spectral_bound_1D}.
The algorithms are detailed in Section~\ref{sec:1D_continuous_model} and Section~\ref{sec:2D_continuous_model} for the 1-D and 2-D cases, respectively. We support these findings with numerical experiments in Section~\ref{sec:Numerical_Results}.

\textbf{Contribution.} 
The uniqueness of recovering a signal from its MRA moments has been extensively studied in recent years (see, for example,~\cite{bandeira2023estimation, bendory2024transversality, perry2019sample}). While most of these works focus on uniform distributions over the group, non-uniform distributions have also been explored~\cite{abbe2018multireference, bendory2022dihedral, sharon2020method}. 
However, computationally efficient provable algorithms have been developed only for the simplest case, where the group of circular shifts acts on 1-D discrete signals~\cite{bendory2017bispectrum, abbe2018multireference, perry2019sample}. This paper introduces the first provable algorithms for a continuous group \( \SO(2) \) that acts on signals and images, marking a significant milestone in the development of provable algorithms for more complex MRA models. 
In Section~\ref{sec:cryoEM}, we discuss the potential implications for cryo-EM. This is particularly crucial since existing algorithms in cryo-EM rely on heuristics without robust validation measures.

\section{1-D MRA over $\SO$(2)} \label{sec:1D_continuous_model}
We first consider a vector space $V$ of 1-D bandlimited signals. 
Let $x(\theta)=\sum_{k=-B}^B\hx[k] e^{\iota k\theta},$ be a $B$-band-limited signal on the circle $\theta\in [0,2\pi)$,
where $\iota=\sqrt{-1}$. Here, $\hx\in\C^{2B+1}$ denotes the Fourier coefficients of $x$. 
In Fourier space, the observations are given by 
\begin{equation} \label{eqn:mra_1D}
    \hy_i[k] = g_i \cdot \hx[k] + \hat{\varepsilon}_i[k], \quad i = 1, \ldots, n,
\end{equation}
where $\hy[k]$ is the $k$-th  Fourier coefficient of the $i$-th observation,   
$g \cdot \hx[k] =  \hx[k]e^{-\iota k \phi}$, where $\phi$ is the angle associated with $g$, $\hat{\varepsilon}_i[0] \sim \mathcal{N}(0, \sigma^2)$, and for $k \geq 0$ the real and imaginary part of $\hat{\varepsilon}_i[k]$ are i.i.d.\ normal variables with zero mean and variance $\sigma^2/2$ that obey the conjugation rule $\hat{\varepsilon}_i[k] = \hat{\varepsilon}_i^*[-k]$.
As we show next, the high frequencies of the distribution are annihilated by the moments, and thus, we assume, without loss of generality, that the Fourier coefficients of the distribution of group elements, $\hrho$, is  2B-bandlimited. 

Recall that the first two population moments are given by
\begin{equation} \label{eqn:1D_Moments_def}
	\begin{split}
		M_1 &= \E\left[\hy_i\right] \in \C^{(2B+1)}, \\ 
		M_2 &= \E\left[\hy_i \hy_i^*\right] \in \C^{(2B+1)^2}.
	\end{split}
\end{equation}
The population moments can be estimated in practice by the empirical moments of the observations by averaging over the observable moments
\begin{equation}
	\begin{split}
		M_{1,\est}&\triangleq \frac{1}{n}\sum_{i=1}^{n} \hy_i, \\  M_{2,\est}&\triangleq \frac{1}{n}\sum_{i=1}^{n} \hy_i\hy^{*}_i.
	\end{split}
\end{equation}
  The population moments can be accurately estimated when $n/\sigma^4\to\infty$. This is our assumption for the rest of the paper unless stated otherwise. 

Let $T\in\C^{(2B+1)\times(2B+1)}$ be a Toeplitz matrix with elements defined by 
$T_\rho[k_1, k_2] \triangleq \hrho[k_1-k_2],$ $D_{\hx}$ is the diagonal matrix of $\hx$,  $\I$ is the identity matrix, and $\odot$ represents the Hadamard product. The moments are given explicitly in the following lemma, proven in Appendix~\ref{proof:Moment_Eq_1D}.

\begin{lemma} \label{lemma:Moment_Eq_1D}
Consider the model~\eqref{eqn:mra}. Then, 
\begin{align} \label{eqn:moments_matrix_1D}
    M_1 &= 2\pi\hx
\odot\hrho, \nonumber\\ 
M_2&=2\pi D_{\hx} T_\rho D_{\hx}^{*}+\sigma^{2}\I_{2B+1}.
\end{align}
\end{lemma}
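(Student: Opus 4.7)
The plan is to compute the two moments entry-wise in the Fourier basis and then repackage the result in matrix form. I would start with the observation model in Fourier space, $\hy_i[k] = \hx[k]\,e^{-\iota k \phi_i} + \hat{\varepsilon}_i[k]$, where $\phi_i$ is the rotation angle of $g_i$ drawn from a density $\rho$ with Fourier expansion $\rho(\phi)=\sum_k \hrho[k]\,e^{\iota k\phi}$, so that
\[
\E_\phi\bigl[e^{-\iota k \phi}\bigr] = \int_0^{2\pi}\rho(\phi)\,e^{-\iota k\phi}\,d\phi = 2\pi\,\hrho[k].
\]
Using the independence of $\phi_i$ and $\hat{\varepsilon}_i$ together with $\E[\hat{\varepsilon}_i[k]]=0$, the first moment follows immediately: $M_1[k] = \hx[k]\,\E[e^{-\iota k\phi_i}] = 2\pi\,\hx[k]\,\hrho[k]$, which is exactly the Hadamard product $2\pi\,\hx\odot\hrho$.

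For $M_2$, I would expand the product
\[
\hy_i[k_1]\,\hy_i^*[k_2] = \hx[k_1]\hx^*[k_2]\,e^{-\iota(k_1-k_2)\phi_i} + \text{cross terms} + \hat{\varepsilon}_i[k_1]\hat{\varepsilon}_i^*[k_2],
\]
take expectations, and use independence to kill the cross terms. The group average yields $\E[e^{-\iota(k_1-k_2)\phi_i}] = 2\pi\,\hrho[k_1-k_2]$, producing the signal contribution $2\pi\,\hx[k_1]\,\hrho[k_1-k_2]\,\hx^*[k_2]$, which is precisely the $(k_1,k_2)$-entry of $2\pi D_{\hx} T_\rho D_{\hx}^*$. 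For the noise piece I would invoke the prescribed covariance structure: for $k_1=k_2=0$ the variance is $\sigma^2$ directly; for $k_1=k_2\ne 0$, summing the variances of the real and imaginary parts gives $\sigma^2/2+\sigma^2/2=\sigma^2$; and for $k_1\ne k_2$ the term vanishes, including the case $k_2=-k_1$, where the conjugation rule $\hat{\varepsilon}_i[-k_1]=\hat{\varepsilon}_i^*[k_1]$ reduces the expectation to $\E[(a+\iota b)^2]=0$ with $a,b$ independent centered Gaussians of equal variance. Hence the noise contribution is exactly $\sigma^2 \I_{2B+1}$.

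Combining the two contributions yields the claimed matrix identity. The main care point, and the only nontrivial bookkeeping in the argument, is the noise covariance: one must handle separately the three cases $k_1=k_2=0$, $k_1=k_2\ne 0$, and $k_2=-k_1\ne 0$ to confirm that the conjugate-symmetric Gaussian model really produces an isotropic $\sigma^2 \I$ rather than off-diagonal contributions at the Hermitian-paired indices. Everything else is a direct expansion of the model and the standard fact that the characteristic function of the rotation angle is, up to a factor of $2\pi$, the Fourier coefficient sequence of $\rho$.
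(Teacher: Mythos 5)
Your proposal follows essentially the same route as the paper's proof: entry-wise expansion in Fourier space, the identity $\E[e^{-\iota k\phi}]=2\pi\hrho[k]$, and independence of the noise to kill cross terms. Your case analysis of the noise covariance (in particular the Hermitian-paired indices $k_2=-k_1$) is actually more explicit than the paper's, which simply asserts $\E[\hat{\varepsilon}[k_1]\hat{\varepsilon}[k_2]^*]=\sigma^2\delta_{k_1,k_2}$, but the argument is the same.
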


\subsection{Frequency Marching Algorithm} \label{subsec:FM_1D}

We begin by studying a frequency marching algorithm that successively recovers the high frequencies based on the low frequencies. 
In particular, we show that given the population moments $M_1$ and $M_2$, this algorithm recovers the signal and the distribution $\rho$ uniquely. 
The underlying idea of the algorithm is to reformulate $M_2$ as a function solely of  $\hrho$, using the information of the first moment $M_1$. This allows iteratively recovering $\hrho$. Then, $\hx$ is recovered from  $M_1$ and $\hrho$. By saying that $\hx$ is \emph{non-vanishing}, we mean that all the Fourier coefficients are non-zero.

\begin{prop}\label{prop:FM_1D}
    Assume that $\hx$ and $\hrho$ are non-vanishing and  $M_{1,\est} = M_1$, and $M_{2,\est} = M_2$, Then, Algorithm~\ref{alg:FM_1D} recovers $\hx$ and $\hrho$ exactly, up to a global rotation. 
\end{prop}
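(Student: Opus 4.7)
The plan is to invert the moment formulas of Lemma~\ref{lemma:Moment_Eq_1D} one frequency at a time, using the first moment to eliminate $\hx$ from $M_2$ and reduce the marching to a recursion purely in $\hrho$. Two preparatory observations set the stage: first, $\hrho$ is the Fourier series of a probability density on $[0,2\pi)$, so $\hrho[0]=1/(2\pi)$ is known a priori; second, the diagonal of $M_2$ then gives $|\hx[k]|^2=M_2[k,k]-\sigma^2$, from which $|\hrho[k]|$ follows via $|M_1[k]|=2\pi|\hx[k]||\hrho[k]|$. Because $\hx$ and $\hrho$ are non-vanishing, the moment $M_1[k]=2\pi\hx[k]\hrho[k]$ is nonzero at every index.

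Next, I would derive the marching identity by substituting $\hx[k_1]=M_1[k_1]/(2\pi\hrho[k_1])$ and its conjugate into the off-diagonal entries of $M_2$ given in \eqref{eqn:moments_matrix_1D}. Rearranging yields, for $k_1\neq k_2$,
\[
\hrho[k_1-k_2]=\frac{2\pi\,M_2[k_1,k_2]\,\hrho[k_1]\,\hrho^*[k_2]}{M_1[k_1]\,M_1^*[k_2]},
\]
which expresses $\hrho$ at frequency $k_1-k_2$ in terms of the moments and two previously known Fourier coefficients of $\hrho$. The global-rotation ambiguity is absorbed by a free choice of the argument of $\hrho[1]$, since replacing $(\hx[k],\hrho[k])$ by $(\hx[k]e^{-\iota k\phi},\hrho[k]e^{\iota k\phi})$ leaves $M_1$ and $M_2$ invariant.

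With $\hrho[0]$ and $\hrho[1]$ in hand (and $\hrho[-1]=\hrho^*[1]$ by reality of $\rho$), the marching identity with $k_1=k$, $k_2=-1$ produces $\hrho[k+1]$ from $\hrho[k]$, so induction recovers $\hrho[k]$ for all $|k|\leq B$; the remaining coefficients with $B<|k|\leq 2B$ are obtained by taking $k_1=B$ and varying $k_2$ over $\{-1,\ldots,-B\}$. The signal is then recovered by $\hx[k]=M_1[k]/(2\pi\hrho[k])$. The only point requiring care is verifying that the non-vanishing hypothesis on $\hx$ and $\hrho$ is exactly what keeps every denominator in these formulas nonzero throughout the induction; the rest is index bookkeeping plus a consistency check that the recovered pair satisfies \eqref{eqn:moments_matrix_1D}.
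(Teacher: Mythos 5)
Your proposal is correct and follows essentially the same route as the paper: conjugating the debiased $M_2$ by $D_{M_1}^{-1}$ to obtain $S[k_1,k_2]=\hrho[k_1-k_2]/(\hrho[k_1]\hrho^*[k_2])$, pinning $\hrho[0]=1/(2\pi)$ and the phase of $\hrho[1]$ (the global-rotation gauge), marching up in frequency, and finally dividing $M_1$ by $2\pi\hrho$. The only difference is bookkeeping — you use the entries $S[k,-1]$ and $S[B,-(k-B)]$ where Algorithm~\ref{alg:FM_1D} uses $S[k,k-1]$ and $S[k-B,-B]$ — but these are equivalent instances of the same identity, a redundancy the paper itself exploits in its robust variant (Appendix~\ref{subsec:better_FM_1D}).
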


\begin{algorithm} \caption{ \label{alg:FM_1D}
 A frequency marching algorithm for the 1-D model}
    \begin{algorithmic}
     \item \textbf{Input:} $M_{1,\est}$, $M_{2,\est}$, and $\sigma$
     \item \textbf{Output:} $\hx_{\est}$, $\hrho_{\est}$
    \begin{enumerate}
        \item $M_{2,\est} \gets M_{2,\est} - \sigma^2 \I_{2B+1}$  (debiasing)
        \item $S \gets 2\pi D_{M_{1,\est}}^{-1} M_{2,\est} D_{M_{1,\est}^{*}}^{-1}$
        \item $\hrho_{\est}[0] \gets \frac{1}{2 \pi}$ \label{step:FM_1D_rho_0}
        \item $\hrho_{\est}[1] \gets \sqrt{\nicefrac{1}{2 \pi S[1,1]}}$
        \For{$2 \leq k \leq B$}
        \item $\hrho_{\est}[k] \gets \label{step:2kB_FM_1D}\frac{\hrho_{\est}[1]}{S[k,k-1]\hrho_{\est}^*[k-1]} $
        \EndFor
        \For {$B+1 \leq k \leq 2B$}
        \item \label{step:Bk2B_FM_1D}
        $\hrho_{\est}[k] \gets S[k-B,-B] \hrho_{\est}[k-B] \hrho_{\est}[B]$
        \EndFor
        \For{$-2B \leq k \leq -1$}
        \item
        $\hrho_{\est}[k] = \hrho_{\est}^*[-k]$ \label{step:FM_1D_conjugate_fill}
        \EndFor
        \item $\hx_{\est} \gets \frac{M_{1,\est}}{2\pi\hrho_{\est}}$
    \end{enumerate}
    \end{algorithmic} 
\end{algorithm}

Note that the algorithm uses only a single diagonal of \(M_2\) despite the availability of many others that contain valuable information. Leveraging these additional diagonals could enhance robustness and improve overall performance. In Appendix~\ref{proof:FM_1D_all}, we prove Proposition~\ref{prop:FM_1D} and discuss this idea further.

\subsection{Spectral Algorithm} \label{subsec:Spectral_Algorithm_section_1D}
The second algorithm uses the spectral properties of $M_2$ and the close relation between circulant and Toeplitz matrices.
Let $P_{\hx}=\abs{\hx}^2$ be the power spectrum of $\hx$, which is the diagonal of the second-moment matrix.
The algorithm begins by conjugating (normalizing) the second-moment matrix by $D_{\frac{1}{\sqrt{P_{\hx}}}}$ and then extracting an isolated eigenvector that contains an approximation of the Fourier phases of the signal. This eigenvector is then combined with its Fourier magnitudes. For more details, see \cite{abbe2018multireference}.

\begin{algorithm} 
\caption{A Spectral Algorithm for the 1-D model} \label{alg:spectral_algorithm_1D}
    \begin{algorithmic}
        \item \textbf{Input:} $M_{1,\est}$, $M_{2,\est}$, and $\sigma$
        \item \textbf{Output:} $\hx_{\est}$, $\hrho_{\est}$
        \begin{enumerate}
            \item $M_{2,\est} \gets M_{2,\est} - \sigma^2 \I_{2B+1}$  (debiasing)
            
            \item $P_{\hx} \gets \diag(M_{2,\est})$
            \item $ \mathcal{M}_2 \gets D_{\frac{1}{\sqrt{P_{\hx}}}} M_{2,\est} D_{\frac{1}{\sqrt{P_{\hx}}}}$ (conjugating) \label{step:1D_mathcal_M_2}
            \item Find the eigenvalue decomposition of $\mathcal{M}_2$: eigenvalues $\lambda_0 \geq \lambda_1 \geq \ldots \geq \lambda_{2B}$ with corresponding eigenvectors $v_0, v_1, \ldots, v_{2B}$. \label{step:mathcal_m2_eigenvalue_decomp_1D}
            \item $\kappa \gets \argmax_{0 \leq k \leq 2B} \min_{k'\neq k} \abs{\lambda_{k'}-\lambda_{k}}$ \label{step:unique_eigenvector_1D}
            \item $\tx_{\est} \gets \sqrt{2B+1} v_{\kappa}$ 
            \item $\beta \gets e^{\iota (\measuredangle M_{1,\est}[0] - \measuredangle \tx_{\est}[0])}$ 
            \item $\tx_{\est} \gets \beta \tx_{\est}$ \label{step:def_x_tilde_1D}
            \item $\hx_{\est} \gets  \sqrt{P_{\hx}}\odot \tx_{\est}$
            \item $\hrho_{\est} \gets \frac{M_{1,\est}}{2 \pi \hx}$
        \end{enumerate}
    \end{algorithmic}
\end{algorithm}

The crux of the spectral algorithm is the similarity between Toeplitz and circulant matrices and the projection of a given Hermitian Toeplitz matrix into the space of circulant matrices, which are diagonalized by the DFT matrix. Consider the problem:
\begin{equation}
\argmin_{v\in\C^{2B+1}} \norm{T_\rho-C_{v}}^{2}_{F},
\end{equation}
where $v$ is a vector defining the circulant matrix $C_{v}$. This problem enjoys a closed-form solution~\cite{chan1988circulant_preconditioner}, given by 
\begin{equation} \label{eqn:v_opt}
    v_{\text{opt}}[k]=\frac{k\hrho\left[-(2B+1-k)\right]+(2B+1-k)\hrho[k]}{2B+1},
\end{equation}
and the distance between $T_\rho$ and the associated circulant matrix (the error) is given by 
\begin{equation} \label{eqn:T_minus_C_min}
    \resizebox{\linewidth}{!}{$S_B(\hrho) = \sum_{k=1}^{2B}\abs{\hrho[k]-\hrho[-(2B+1-k)]}^2 \left(\frac{k(2B+1-k)}{2B+1}\right).$}
\end{equation}

The following theorem shows that given the ideal assumption of $T_{\rho}$ being a circulant matrix, Algorithm~\ref{alg:spectral_algorithm_1D} recovers the signal and the first $B$ Fourier coefficients of $\rho$.

\begin{prop}
 \label{prop:spectral_algorithm_perfect_1D}
    Assume that  $S_B(\hrho)=0$, $\hx$ is non-vanishing and that $\hvopt$ has an isolated entry. In addition, assume that  $M_{1,\est}=M_1$, $M_{2,\est}=M_2$. Then, Algorithm~\ref{alg:spectral_algorithm_1D} recovers $\hx$ and the first B Fourier coefficients of $\rho$ exactly, up to a global rotation.
  \end{prop}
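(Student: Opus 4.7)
The plan is to trace each step of Algorithm~\ref{alg:spectral_algorithm_1D} under the noiseless assumption ($M_{1,\est}=M_1$, $M_{2,\est}=M_2$) and verify that the output matches the target up to a global rotation.

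First, I would use Lemma~\ref{lemma:Moment_Eq_1D} together with the probability normalization $\hrho[0]=\tfrac{1}{2\pi}$ to show that, after debiasing, the diagonal of $M_{2,\est}$ is exactly $P_{\hx}[k]=|\hx[k]|^{2}$. Since $\hx$ is non-vanishing, the rescaling in Step~\ref{step:1D_mathcal_M_2} is well-defined and produces
\[
  \mathcal{M}_{2} \;=\; 2\pi\,D_{\phi}\,T_{\rho}\,D_{\phi}^{*},\qquad D_{\phi}:=D_{\hx/|\hx|},
\]
a unitary conjugation of $T_{\rho}$ by the diagonal phase matrix $D_{\phi}$.

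Second, the assumption $S_{B}(\hrho)=0$ together with \eqref{eqn:T_minus_C_min} forces $T_{\rho}=C_{\vopt}$. Circulant matrices are diagonalized by the DFT matrix $F\in\C^{(2B+1)\times(2B+1)}$, with eigenvalues proportional to the entries of $\hvopt$. Consequently, the eigenvectors of $\mathcal{M}_{2}$ are $\{D_{\phi}F^{*}e_{\ell}\}_{\ell=0}^{2B}$ and share the same eigenvalues as $T_{\rho}$. The hypothesis that $\hvopt$ has an isolated entry guarantees that the $\argmax$ in Step~\ref{step:unique_eigenvector_1D} returns an index $\kappa$ whose eigenspace is one-dimensional; thus $v_{\kappa}$ coincides with $D_{\phi}F^{*}e_{\kappa}$ up to a single unimodular scalar $e^{\iota\alpha}$. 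Scaling by $\sqrt{2B+1}$ then yields
\[
  \tx_{\est}[k] \;=\; e^{\iota\alpha}\,\frac{\hx[k]}{|\hx[k]|}\,e^{\iota\,2\pi k\kappa/(2B+1)}.
\]

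The main obstacle is eliminating the unknown phase $\alpha$ arising from the eigenvector's intrinsic ambiguity. The key observation is that the normalization $\hrho[0]=\tfrac{1}{2\pi}$ implies $M_{1}[0]=2\pi\hx[0]\hrho[0]=\hx[0]$, so $\measuredangle M_{1}[0]=\measuredangle\hx[0]$ while $\measuredangle\tx_{\est}[0]=\alpha+\measuredangle\hx[0]$. Hence $\beta=e^{-\iota\alpha}$ in Step~\ref{step:def_x_tilde_1D} cancels $\alpha$ exactly. Recombining with $\sqrt{P_{\hx}}$ gives $\hx_{\est}[k]=\hx[k]\,e^{\iota\,2\pi k\kappa/(2B+1)}$, which is $\hx$ rotated by the angle $-2\pi\kappa/(2B+1)$. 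Finally, using $M_{1}=2\pi\hx\odot\hrho$ produces $\hrho_{\est}[k]=\hrho[k]\,e^{-\iota\,2\pi k\kappa/(2B+1)}$ for $k=-B,\ldots,B$, which is $\rho$ rotated by the same angle and truncated to its lowest $2B+1$ Fourier coefficients, as claimed.
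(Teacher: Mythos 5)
Your proposal is correct and follows essentially the same route as the paper: both express $\mathcal{M}_2$ as a unitary conjugation of the circulant $C_{\vopt}$ (using $S_B(\hrho)=0$) by the diagonal phase matrix $D_{\tx}$, identify the eigenvectors as phase-modulated DFT columns, invoke the isolated entry of $\hvopt$ to pin down $v_\kappa$ up to a unimodular scalar, cancel that scalar via $\measuredangle M_1[0]=\measuredangle\hx[0]$, and recover $\hrho$ from $M_1$. The only cosmetic differences are that the paper routes the argument through $D_{\tx}=WC_{\ifft(\tx)}W^*$ rather than diagonalizing $C_{\vopt}$ directly, and you conflate the sorted-eigenvalue index $\kappa$ with the DFT shift index $l$ determining the rotation angle, which is harmless since the claim is only up to a global rotation.
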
 
  
The proof of Proposition~\ref{prop:spectral_algorithm_perfect_1D} can be found in Appendix~\ref{proof:spectral_algorithm_perfect_1D}.
In practice, we do not expect $T_\rho$ to be a circulant matrix, implying $S_B(\hrho)>0$. 
Hence, in practice, the spectral algorithm only approximates the solution, and the estimation error depends on $S_B(\hrho)$.   
This is captured by the following result, based on the Davis-Kahan Theorem~\cite{davis_kahan1970sin_theta}. 

\begin{theorem} \label{thm:spectral_bound_1D}
Consider $\lambda^{T}_0 \geq \lambda^{T}_1 \geq \ldots \geq \lambda^{T}_{2B}$ and $\lambda^{C}_0 \geq \lambda^{C}_1 \geq \ldots \geq \lambda^{C}_{2B}$ to be the eigenvalues of $T_\rho$ and $C_{\vopt}$, respectively. We denote $P_{\max}:=\max_{0\leq k \leq B}P_{\hx}[k]$ and $\delta_{\kappa}=\max\left(\min_{j \neq \kappa}{\abs{\lambda^{C}_{\kappa}-\lambda^{T}_{j}}}, \min_{j \neq \kappa}{\abs{\lambda^{C}_{j}-\lambda^{T}_{\kappa}}}\right)$.
   Assume the following conditions hold:
   \begin{enumerate}
       \item 
       $\hx$ is non-vanishing.
       \item 
       $\kappa$ of Step~\ref{step:unique_eigenvector_1D} of Algorithm~\ref{alg:spectral_algorithm_1D} has both $\lambda_{\kappa}^{T}$ and $\lambda_{\kappa}^{C}$ with an eigenspace of dimension 1.
       \item 
       The estimate calculated in Step~\ref{step:def_x_tilde_1D} of Algorithm~\ref{alg:spectral_algorithm_1D} satisfies  $\tx_{\est}^*[\Phi_{\frac{2\pi l}{2B+1}} \odot\tx] \geq 0$.
       \item 
       $S_B(\hrho) \leq \delta_{\kappa}^2$.
   \end{enumerate} 
   Then, there exists $ 0 \leq l \leq 2B$, such that
    \begin{equation}
    \begin{split}
    \resizebox{\linewidth}{!}{$\norm{\hx_{\est} - \Phi_{\frac{2\pi l}{2B+1}} \odot\hx}_F^2 \leq 2(2B+1)P_{\max}
    \left[1- \sqrt{1-\frac{S_B(\hrho)}{\delta_{\kappa}^{2}}}\right],$}
    \end{split}
    \end{equation}
    where $\Phi_{\varphi}[k] = e^{-\iota k \varphi}$.
     \end{theorem}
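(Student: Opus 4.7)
My plan is to recast $\mathcal{M}_2$ as a unitary conjugation of $T_\rho$, introduce the circulant proxy obtained from $C_{\vopt}$ (on which Algorithm~\ref{alg:spectral_algorithm_1D} would recover $\hx$ exactly up to a cyclic rotation, by Proposition~\ref{prop:spectral_algorithm_perfect_1D}), and quantify the discrepancy via the Davis--Kahan $\sin\theta$ theorem. The Frobenius distance $\|T_\rho - C_{\vopt}\|_F = \sqrt{S_B(\hrho)}$ from \eqref{eqn:T_minus_C_min} will supply the numerator, $\delta_\kappa$ the denominator, and $P_{\max}$ will convert a unit-modulus eigenvector error back to an error on $\hx_{\est}$.

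First, by Lemma~\ref{lemma:Moment_Eq_1D} and the debiasing step, $M_{2,\est} = 2\pi D_{\hx} T_\rho D_{\hx}^{*}$. Writing $\hx = |\hx|\odot e^{\iota\phi_{\hx}}$ and using $\sqrt{P_{\hx}} = |\hx|$ (well-defined by assumption~1), Step~\ref{step:1D_mathcal_M_2} gives $\mathcal{M}_2 = 2\pi U T_\rho U^{*}$ with the unitary diagonal $U = D_{e^{\iota\phi_{\hx}}}$. I would then define the proxy $\mathcal{M}_2^{C} \triangleq 2\pi U C_{\vopt} U^{*}$. Because $C_{\vopt}$ is circulant, its eigenvectors are the DFT columns $w_l[k] = \omega^{kl}/\sqrt{2B+1}$ with $\omega = e^{\iota 2\pi/(2B+1)}$, so running Algorithm~\ref{alg:spectral_algorithm_1D} on $\mathcal{M}_2^{C}$ would yield $\tx[k] = \sqrt{2B+1}\,(Uw_\kappa)[k] = e^{\iota\phi_{\hx}[k]}\omega^{k\kappa}$ and subsequently $\hx[k]\omega^{k\kappa} = (\Phi_{2\pi l/(2B+1)}\odot\hx)[k]$ for $l \equiv -\kappa \pmod{2B+1}$, fixing the rotate that appears in the theorem.

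Next I would apply Davis--Kahan: since conjugation by the unitary $U$ preserves the operator norm, $\|\mathcal{M}_2 - \mathcal{M}_2^{C}\|_{\text{op}} = 2\pi\|T_\rho - C_{\vopt}\|_{\text{op}} \leq 2\pi\sqrt{S_B(\hrho)}$, while the relevant spectral gap is $2\pi\delta_\kappa$ (the $\max$ in the definition of $\delta_\kappa$ picks the better of the two one-sided gaps Davis--Kahan allows). Assumption~2 gives simplicity of the $\kappa$-th eigenvalue in both matrices and assumption~4 keeps the square root well-defined, so the bound reads $|\langle v_\kappa, Uw_\kappa\rangle| \geq \sqrt{1 - S_B(\hrho)/\delta_\kappa^{2}}$. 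By assumption~3 together with the global-phase correction in Step~\ref{step:def_x_tilde_1D}, $\langle\tx_{\est},\tx\rangle$ is real and non-negative, so with $\|\tx_{\est}\|^2 = \|\tx\|^2 = 2B+1$,
\begin{equation*}
\|\tx_{\est} - \tx\|^2 = 2(2B+1)\bigl(1 - |\langle v_\kappa, Uw_\kappa\rangle|\bigr) \leq 2(2B+1)\Bigl(1 - \sqrt{1 - S_B(\hrho)/\delta_\kappa^{2}}\Bigr).
\end{equation*}
Since $\hx_{\est} - \Phi_{2\pi l/(2B+1)}\odot\hx$ is obtained from $\tx_{\est} - \tx$ by entrywise multiplication with $\sqrt{P_{\hx}[k]}\leq\sqrt{P_{\max}}$, each squared entry inflates by at most $P_{\max}$, producing the claimed bound.

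The main obstacle will not be the perturbation estimate itself but the careful bookkeeping of three intertwined phase ambiguities---the DFT index $\kappa$ (hence the rotate $l$), the unimodular eigenvector-scaling freedom (resolved through $M_{1,\est}[0]$ in Step~\ref{step:def_x_tilde_1D}), and the inner-product phase inherent to the Davis--Kahan comparison (resolved by assumption~3)---so that $\tx_{\est}$ aligns with a \emph{single} rotate of $\tx$ rather than a mixture. Once $\mathcal{M}_2 = 2\pi U T_\rho U^{*}$ and $\|T_\rho - C_{\vopt}\|_F^2 = S_B(\hrho)$ are identified, the remaining norm algebra is routine.
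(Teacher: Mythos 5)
Your proposal follows essentially the same route as the paper's proof: write $\mathcal{M}_2$ as a unitary (diagonal-phase) conjugation of $T_\rho$, introduce the circulant proxy $\mathcal{M}_2^C = 2\pi D_{\tx} C_{\vopt} D_{\tx}^{*}$, apply Davis--Kahan with perturbation $\sqrt{S_B(\hrho)}$ and gap $\delta_\kappa$, and convert the eigenvector error back through $\sqrt{P_{\hx}}$. The only piece you gloss over is that the paper separately proves (via the conjugate-flip symmetry $\tx_{\est}[k]=\tx_{\est}^*[-k]$ and a Parseval argument) that the inner product $(\beta v_\kappa^T)^* v_\kappa^C$ is automatically real, so that assumption~3 only needs to supply its sign---but since you read assumption~3 as asserting the inner product is a non-negative real, your argument closes correctly.
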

     
The proof of Theorem~\ref{thm:spectral_bound_1D} is in Appendix~\ref{proof:spectral_bound_1D}.
\section{2-D MRA over $\SO$(2)} \label{sec:2D_continuous_model}
We now extend the algorithms from 1-D signals to 2-D images. 
We assume that $V$ is the space of bandlimited images in the sense that they can be presented with finitely many Fourier-Bessel coefficients~\cite{zhao2016fast}. 
Namely, the sought image is of the form
\begin{equation*}
\resizebox{\linewidth}{!}{$
x\left(\theta,r\right)=\sum_{\left(k,q\right)\in I}\hx[k,q] e^{\iota k\theta} J_{q}\left(r\right), \quad \theta \in[0,2\pi), \quad r \in [0,1], $}
\end{equation*}
where  $u_{k,q}(\theta,r) = e^{\iota k\theta} J_{q}\left(r\right)$ is the general Fourier-Bessel function, with $J_q(r)$ being the cylindrical Bessel function of order $q$. The set $\{u_{k,q}(\theta,r)\}_{k\in \Z,q \in \Z_{\geq 0}}$ spans all the ``nice enough'' functions over the unit disc. 
The images are bandlimited in the sense that there exists an angular bandwidth $B$ and a maximal radial frequency $Q_{k}$ such that the set  $I=\left\{ (k,q) \colon -B\leq k\leq B,\, 0\leq q\leq Q_{k}-1\right\}$ is finite. We denote the total number of coefficients by $|I|$.   
In Appendix~\ref{proof:rotations_2D}, we prove that
\begin{equation} \label{eqn:rotation_calc_2D}
    x(\theta-\varphi,r) = \sum_{\left(k,q\right)\in I}\hx[k,q]e^{-\iota  k\varphi}u_{k,q}\left(\theta,r\right).
\end{equation}
Thus, in Fourier-Bessel space, the observations are given by 
\begin{equation} \label{eqn:mra_2D}
	\hy_i[k,q] = g_i \cdot \hx[k,q] + \hat{\varepsilon}_i[k,q], \quad i = 1, \ldots, n,
\end{equation}
where $\hy[k,q]$ is the $(k,q)$-th  Fourier-Bessel coefficient of the $i$-th observation,   
$g \cdot \hx[k,q] =  \hx[k,q]e^{-\iota k \phi}$, where $\phi$ is the angle associated with $g$, $\hat{\varepsilon}_i[0,q] \sim \mathcal{N}(0, \sigma^2)$, and for $k \geq 0$ the real and imaginary part of $\hat{\varepsilon}_i[k,q]$ are i.i.d.\ normal variables with zero mean and variance $\sigma^2/2$ that obey the conjugation $\hat{\varepsilon}_i[k, q] = \hat{\varepsilon}_i^*[-k, q]$.
In this sequel, we order the Fourier-Bessel coefficients in lexicographical order and think of them as a column vector with $|I|$ complex entries.

The following lemma gives the first two moments. It is proven in Appendix~\ref{proof:Moment_Eq_2D}.
\begin{lemma} \label{lemma:Moment_Eq_2D}
	Consider the 2-D MRA problem~\eqref{eqn:mra_2D}.
	The first two moments are given by 
	\begin{align}
		M_1&=2\pi\hx\odot
		\mathcal{R},\\
		M_2&=2\pi D_{\hx}\mathcal{T}_{\rho} D_{\hx^*} + \sigma^2 \I_{|I|}.
	\end{align} 
	Here, $\mathcal{R}\in \C^{|I|}$ and 
	$\mathcal{T}_{\rho} \in \C^{|I|\times|I|}$ are block matrices with blocks indexed by $-B \leq k_1,k_2 \leq B$ and 
are defined as:
\begin{align*}
	(\mathcal{T}_{\rho})_{k_1 k_2}&= \hrho[k_1-k_2]\mathds{1}_{Q_{\abs{k_1}}\times Q_{\abs{k_2}}},\\
	(\mathcal{R})_{k_1} &= \hrho[k_1]\mathds{1}_{Q_{\abs{k_1}}\times 1},
\end{align*}
where 	$\mathds{1}_{i\times j}$ denotes a matrix of ones of size $i \times j$.	
\end{lemma}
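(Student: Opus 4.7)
The plan is to mirror the template of Lemma~\ref{lemma:Moment_Eq_1D}: read the scalar identities directly off the observation model \eqref{eqn:mra_2D} and then repackage them into the block matrix form specified in the statement. I would start by writing $\hy_i[k,q] = \hx[k,q]\,e^{-\iota k\phi_i} + \hat{\varepsilon}_i[k,q]$, where $\phi_i\in[0,2\pi)$ is the angle associated with $g_i$. Taking expectation and using that $\hat{\varepsilon}_i$ is zero-mean and independent of $g_i$, the $(k,q)$-entry of $M_1$ equals $\hx[k,q]\,\E[e^{-\iota k\phi_i}]$. Expanding $\rho$ in its Fourier series and invoking orthogonality of $\{e^{\iota j\phi}\}$ on $[0,2\pi)$ gives $\E[e^{-\iota k\phi_i}] = 2\pi\hrho[k]$, exactly as in the 1-D derivation.

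The structural observation is that $\hrho[k]$ does not depend on the radial index $q$. Hence, after the lexicographic ordering of $(k,q)\in I$, all $Q_{|k|}$ entries sharing the same angular index $k$ carry the identical scalar $2\pi\hrho[k]$. This is exactly the block column structure $(\mathcal{R})_{k}=\hrho[k]\mathds{1}_{Q_{|k|}\times 1}$, which yields the first identity $M_1 = 2\pi\hx\odot\mathcal{R}$.

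For $M_2$, I would expand $\hy_i[k_1,q_1]\hy_i^*[k_2,q_2]$ into four terms and discard the two signal-noise cross terms, which vanish in expectation by independence and centering. The signal-signal term becomes $\hx[k_1,q_1]\hx^*[k_2,q_2]\,\E[e^{-\iota(k_1-k_2)\phi_i}] = 2\pi\,\hx[k_1,q_1]\,\hrho[k_1-k_2]\,\hx^*[k_2,q_2]$ by the same Fourier-orthogonality argument. Since $\hrho[k_1-k_2]$ depends only on the two angular indices, the $(k_1,k_2)$-block of size $Q_{|k_1|}\times Q_{|k_2|}$ is a constant matrix scaled by the outer diagonals $D_{\hx}$ on the left and $D_{\hx^*}$ on the right, giving $2\pi D_{\hx}\mathcal{T}_{\rho}D_{\hx^*}$ with $(\mathcal{T}_\rho)_{k_1 k_2}=\hrho[k_1-k_2]\mathds{1}_{Q_{|k_1|}\times Q_{|k_2|}}$. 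The noise-noise term is $\E[\hat{\varepsilon}_i[k_1,q_1]\hat{\varepsilon}_i^*[k_2,q_2]]$, which by the isotropic Fourier-Bessel noise model equals $\sigma^2\delta_{k_1,k_2}\delta_{q_1,q_2}$, producing the $\sigma^2\I_{|I|}$ correction.

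The main obstacle is essentially notational bookkeeping: one must verify carefully that the block layout with varying block sizes $Q_{|k|}$ and $Q_{|k_1|}\times Q_{|k_2|}$ correctly reproduces the scalar identities under the lexicographic ordering, and that $D_{\hx}$ acts only on the outer angular-radial pair of each block. A small but non-trivial check is that the noise covariance remains $\sigma^2 \I_{|I|}$ in the Fourier-Bessel basis under the conjugation rule $\hat{\varepsilon}_i[k,q]=\hat{\varepsilon}_i^*[-k,q]$; this follows from the specified variances ($\sigma^2$ for $k=0$, and $\sigma^2/2$ for each of the real and imaginary parts when $k>0$) together with independence across $q$.
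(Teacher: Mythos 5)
Your proposal is correct and follows essentially the same route as the paper's proof in Appendix~\ref{proof:Moment_Eq_2D}: expand $\hy_i[k,q]$ per the model, use zero-mean independent noise to kill the cross terms, apply $\E[e^{-\iota k\varphi}]=2\pi\hrho[k]$ from~\eqref{fourier_of_rho}, and observe that the resulting scalars depend on $q$ only through $\hx$, which is exactly the constant-block structure of $\mathcal{R}$ and $\mathcal{T}_\rho$. Your closing check that the noise covariance is $\sigma^2\delta_{k_1,k_2}\delta_{q_1,q_2}$ under the conjugation rule is a detail the paper leaves implicit, but it is consistent with its argument.
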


\subsection{Frequency Marching Algorithm} \label{subsec:FM_2D}
We consider a frequency marching algorithm, similar to Algorithm \ref{alg:FM_1D}. The difference stems from the addition of the radial coordinate that provides more information and, thus, stability. Proposition~\ref{prop:FM_2D} is proven in Appendix~\ref{proof:FM_2D}.

\begin{prop} \label{prop:FM_2D}
     Assuming $M_{1,\text{est}}=M_1$, $M_{2,\text{est}}=M_2$, there exists a frequency marching algorithm, analog to Algorithm~\ref{alg:FM_1D}, that recovers $\hx$ and $\hrho$ exactly, up to a global rotation. 
\end{prop}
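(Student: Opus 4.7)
The plan is to reduce the proof to the already-established 1-D case by exploiting the block structure of the moments in Lemma~\ref{lemma:Moment_Eq_2D}. After the debiasing step $M_{2,\est}\gets M_{2,\est}-\sigma^2\I_{|I|}$, the $(k_1,k_2)$-block of $M_2$ equals $2\pi\hrho[k_1-k_2]$ times the outer product of the $\hx$-slices at angular frequencies $k_1$ and $k_2$, so every entry within a block encodes the same triple $(\hrho[k_1-k_2],\hrho[k_1],\hrho[k_2])$ up to a known product of $\hx$-factors already supplied by $M_1$.

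First, I would normalize by selecting, say, the $(q_1,q_2)=(0,0)$ entry of each $(k_1,k_2)$-block of the debiased $M_{2,\est}$ and dividing by $M_{1,\est}[k_1,0]\,M_{1,\est}^{*}[k_2,0]$. By Lemma~\ref{lemma:Moment_Eq_2D}, this yields the scalar $\hrho[k_1-k_2]/(\hrho[k_1]\hrho^*[k_2])$, exactly the quantity on which Algorithm~\ref{alg:FM_1D} operates. Consequently, the resulting $(2B+1)\times(2B+1)$ matrix $S$ has the same structural form as in the 1-D case, and one can copy verbatim Steps~\ref{step:FM_1D_rho_0}--\ref{step:FM_1D_conjugate_fill}: set $\hrho_{\est}[0]=\frac{1}{2\pi}$, extract $|\hrho_{\est}[1]|$ from $S[1,1]$, march $\hrho_{\est}[k]$ for $2\le k\le B$ using the entries $S[k,k-1]$, fill $B+1\le k\le 2B$ using $S[k-B,-B]$, and close the negative frequencies by conjugation. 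Each Fourier--Bessel coefficient is then read off from $M_1[k,q]=2\pi\hx[k,q]\hrho[k]$ by dividing by the non-vanishing $\hrho[k]$.

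The global-rotation ambiguity enters through the choice of phase of $\hrho_{\est}[1]$: replacing $\hrho[k]\mapsto\hrho[k]e^{\iota k\varphi}$ forces $\hx[k,q]\mapsto\hx[k,q]e^{-\iota k\varphi}$, which by~\eqref{eqn:rotation_calc_2D} corresponds precisely to rotating the underlying image by $\varphi$. Correctness at every step relies on the non-vanishing of the coefficients involved, which is inherited directly from the analogous hypothesis used in Proposition~\ref{prop:FM_1D}.

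The main obstacle is bookkeeping rather than a genuinely new analytic difficulty: one must check that every entry of the $(k_1,k_2)$-block yields the same scalar after normalization, which is immediate from the rank-one outer-product form. A side benefit worth recording is that each block contains $Q_{|k_1|}\cdot Q_{|k_2|}$ redundant copies of the same target scalar, so averaging over them (instead of picking a single entry) leaves the noiseless argument unchanged while providing the stability improvement alluded to in Section~\ref{subsec:FM_2D}; this averaging is the only substantive deviation from the 1-D algorithm.
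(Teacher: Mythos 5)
Your proposal is correct and follows essentially the same route as the paper: normalize the debiased second moment by the first moment, restrict each constant $(k_1,k_2)$-block to a single (e.g.\ $(q_1,q_2)=(0,0)$) entry to obtain the same matrix $S$ as in the 1-D case, run Steps~\ref{step:FM_1D_rho_0}--\ref{step:FM_1D_conjugate_fill} of Algorithm~\ref{alg:FM_1D} verbatim, and then read off $\hx[k,q]$ from $M_1$; your averaging remark likewise matches the paper's robust variant in Appendix~\ref{subsec:better_FM_2D}. The only nit is a dropped factor of $2\pi$ in your normalization (the paper defines $S = 2\pi D_{M_1}^{-1}M_2D_{M_1^*}^{-1}$), which is immaterial to the argument.
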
 
  
\subsection{Spectral Algorithm} \label{subsec:Spectral_Algorithm_section_2D}

Similarly to the 1-D case, a spectral algorithm can be designed for the 2-D case, analogous to Algorithm~\ref{alg:spectral_algorithm_1D}. The main difference is that the Toeplitz and circulant matrices are replaced by block Toeplitz and block circulant matrices, respectively.  
As in the 1-D case, if the block Toeplitz matrix happens to be a block circulant matrix, the spectral algorithm can recover the signal—up to a global rotation—using second-moment information. Otherwise, the reconstruction error is proportional to the distance between the block Toeplitz matrix and its associated circulant approximation. In Appendix~\ref{prop:spectral_algorithm_perfect_2D} we design a 2-D spectral algorithm, and bound its reconstruction error in Appendix~\ref{thm:spectral_bound_2D}.

\begin{figure*}[ht]
    \centering    
    \subfloat[]{
\includegraphics[width=.3\linewidth]{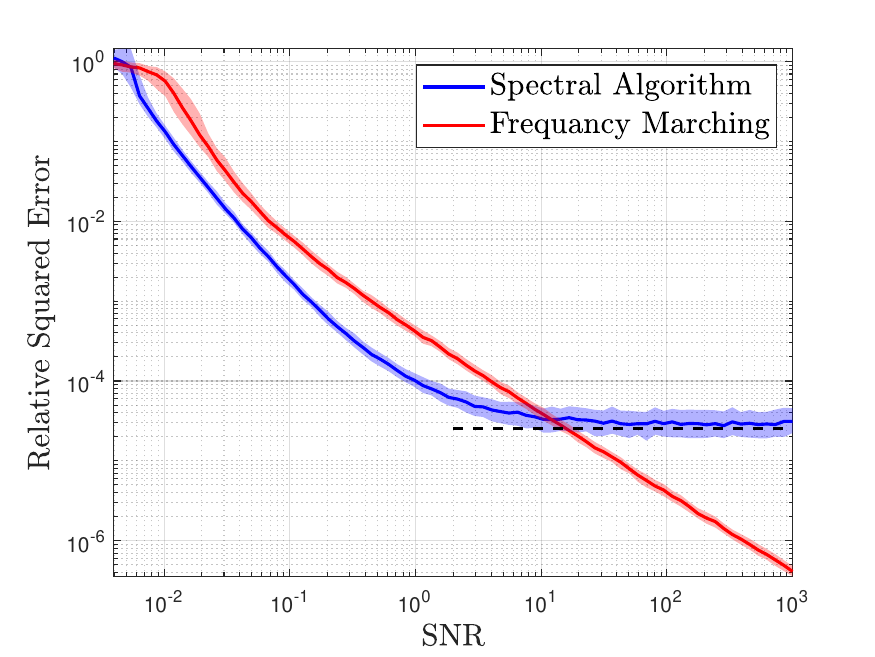}

\label{fig:N1}
    }
  \subfloat[]{
\includegraphics[width=.3\linewidth]{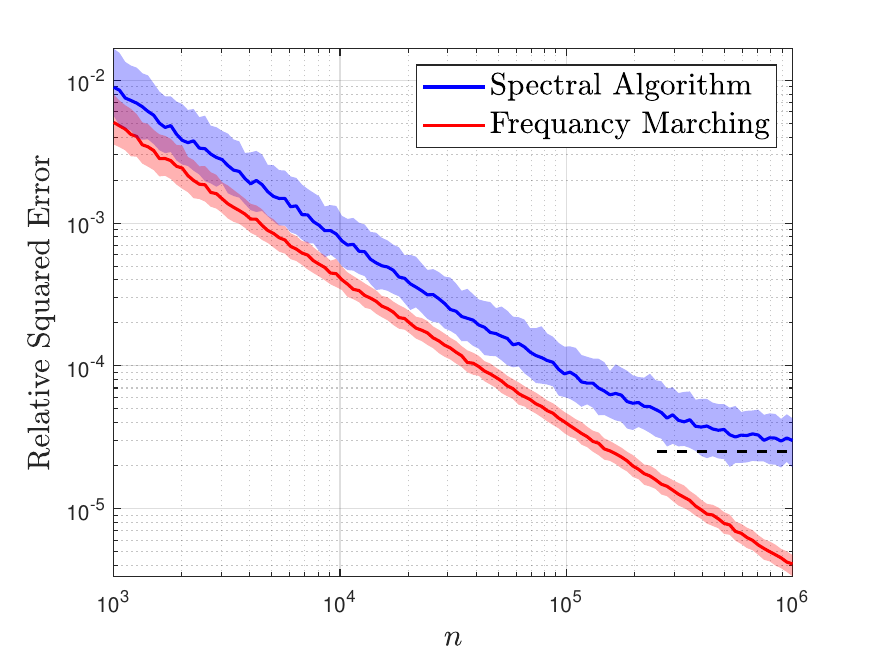}

\label{fig:func_of_N_median}
    }    
\subfloat[]{
    \includegraphics[width=.3\linewidth]{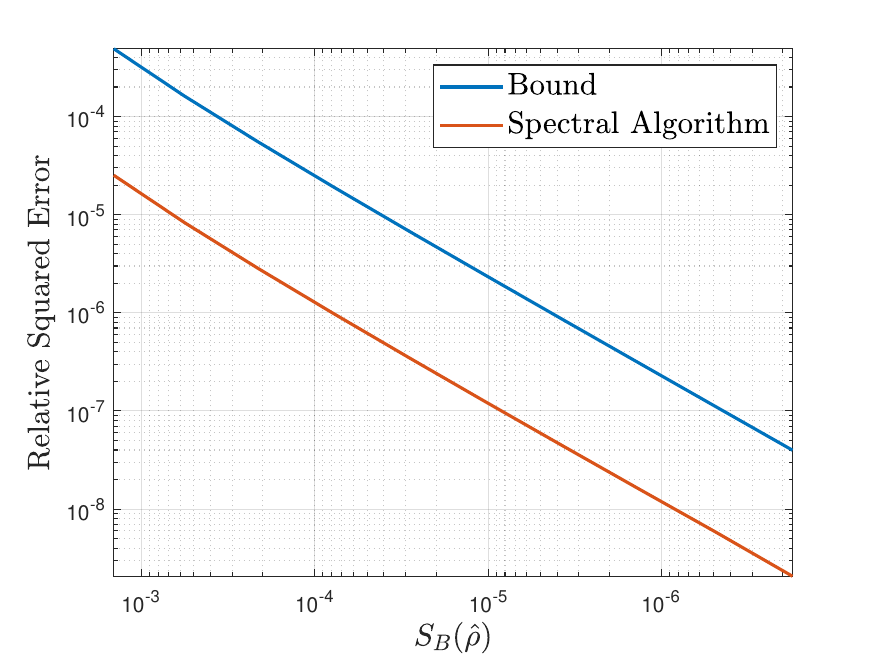}
    \label{fig:bound_with_2_errors}
    }
    \caption{
    	(a) Recovery error of the frequency marching and spectral algorithms, with 20\% error margins, as a function of the SNR for \( n = 10^6 \) observations.  
    	(b) Recovery error of the frequency marching and spectral algorithms, with 20\% error margins, as a function of the number of observations for \(\text{SNR} = 100\).  
    	(c) Recovery error of the spectral algorithm as a function of \(S_B(\hrho)\), compared to the theoretical bound from Theorem~\ref{thm:spectral_bound_1D}.  
    	}
    \label{fig:experiments}
\end{figure*}

\section{Numerical Results} \label{sec:Numerical_Results}
We present the numerical results of both algorithms for the 2-D case. The performance for the 1-D case is similar, but due to space constraints, it is omitted.  
In all experiments, the ground truth image was generated with \( B = 10 \) and \( Q = 2 \). The magnitudes of all Fourier–Bessel coefficients were set to one, and their phases were drawn from a uniform distribution, subject to the constraint that the image remains real.  
The real and imaginary components of the distribution were initially drawn from a uniform distribution over the interval \([0, 1]\) and subsequently corrected according to Equation~\eqref{eqn:v_opt} to ensure \(S_B(\hrho) = 0\) while preserving both the positivity and normalization of the distribution in the signal domain (as it is a distribution).  
Next, we perturbed the distribution as
\begin{equation}\label{eqn:perturbation}
\hrho[k] \to e^{\iota \eta \sqrt{k}}\hrho[k],
\end{equation}
with \(\eta = 0.1\). In the experiments presented in Figure~\ref{fig:experiments}a and~\ref{fig:experiments}b, this perturbation resulted in \(S_B(\hrho) = 0.0014\). We define the recovery error as
\begin{equation}
    \frac{\min_{\varphi\in[0,2\pi)}{\norm{\hx_{\est} - e^{-\iota k \varphi} \odot\hx}_F^2}}{\norm{\hx}_F^2}.
\end{equation}

In the experiment whose results are shown in Figure~\ref{fig:experiments}a, we present the recovery error as a function of the SNR, defined by  
\[
\text{SNR} = \frac{\sum_{k,q}P_{\hx}[k,q]}{(2B+1) Q \sigma^2},
\]  
with \( n = 10^6 \) observations. We report the median error with 20\% error margins, computed over 400 trials per SNR value.  
As expected, the error decreases as the SNR increases. For higher SNR values, the error of the spectral algorithm stagnates because \( S_B(\hrho) \neq 0 \) (see Theorem~\ref{thm:spectral_bound_1D}). In contrast, the error of the frequency marching algorithm approaches zero at high SNR, as predicted by Proposition~\ref{prop:FM_2D}. The error rate of the frequency marching is proportional to $1/\text{SNR}$.  
However, at lower SNR values, the spectral algorithm outperforms the frequency marching algorithm. This is because the spectral algorithm processes all second-moment information simultaneously, while the frequency marching algorithm, due to its sequential nature, accumulates errors over frequencies.

The experiment shown in Figure~\ref{fig:experiments}b presents the recovery error as a function of the number of observations for \(\text{SNR} = 100\). We report the median error over 800 trials with 20\% error margins.  
In this high-SNR regime, the frequency marching algorithm outperforms the spectral algorithm, achieving an error rate proportional to \(1/n\).  

In the experiment presented in Figure~\ref{fig:experiments}c, we assume access to the exact moments and compare the error of the spectral algorithm with the theoretical bound from Theorem~\ref{thm:spectral_bound_1D}. This comparison is performed by varying the parameter \(\eta\) in~\eqref{eqn:perturbation} from \(10^{-1}\) to \(10^{-3}\) and optimizing over all possible rotations of $\rho$ to achieve the minimal bound.  
The theoretical bound closely follows the numerical results of the spectral algorithm as a function of \(S_B(\hrho)\). However, a noticeable gap remains, indicating that Theorem~\ref{thm:spectral_bound_1D} is not tight.  

\section{Vision: Provable algorithms for cryo-EM} \label{sec:cryoEM}
This paper introduces provable frequency marching and spectral algorithms for the MRA problem with the group of in-plane rotations, \(\mathrm{SO}(2)\). This represents a significant advancement toward developing rigorous algorithms for more complex MRA models.

Our primary motivation is cryo-EM, which can be modeled as an MRA model with the group of 3-D rotations $\SO(3)$ and an additional linear operator (which is not modeled by~\eqref{eqn:mra})~\cite{bendory2020single}.
Despite being widely used and occasionally yielding excellent results, existing cryo-EM algorithms lack theoretical guarantees and robust validation measures. Moreover, cryo-EM algorithms are susceptible to well-documented pitfalls and model biases, such as the ``Einstein from noise'' phenomenon~\cite{henderson2013avoiding,balanov2024einstein}.
To address these challenges, our vision is to design provable algorithms for cryo-EM—whether through frequency marching, spectral methods, or alternative techniques—to enhance confidence in cryo-EM reconstructions and ultimately drive new biological discoveries.

\appendices
\section{Proof of Lemma~\ref{lemma:Moment_Eq_1D}}
\label{proof:Moment_Eq_1D}
\begin{proof}
We know that $\hy_{j}[k] = \hx[k]e^{-\iota k\varphi_j}+\hat{\varepsilon}[k]$ for $-B \leq k \leq B$. Substituting this in (\ref{eqn:1D_Moments_def}):
\begin{equation}
\begin{split}
    M_1[k]&=\E\left[\hx[k]e^{-\iota k\varphi}+\hat{\varepsilon}[k]\right]\\
    &=\hx[k]\E\left[e^{-\iota k \varphi}\right]+\E\left[\hat{\varepsilon}[k]\right]= \hx[k]\E\left[e^{-\iota k \varphi}\right] ,
\end{split}
\end{equation}
and
\begin{equation}
\begin{split}
    M_{2}\left[k_{1},k_{2}\right]&=
    \mathbb{E}\left[\hx[k_1]\hx[k_2]^{*}e^{-\iota k_{1}\varphi}e^{\iota k_{2}\varphi}\right] \\ &+\mathbb{E}\left[\hat{\varepsilon}[k_{1}]\hat{\varepsilon}[k_{2}]^{*}\right] \\
    &=\hx[k_1]\hx[k_2]^{*}\mathbb{E}\left[e^{-\iota\left(k_{1}-k_{2}\right)\varphi}\right]+\sigma^{2} \delta_{k_1,k_2}.
\end{split}
\end{equation}
Here $\delta_{\cdot,\cdot}$ is the Kronecker delta. Note that $\E\left[e^{-\iota k\varphi}\right]$ is 
\begin{equation}
\E\left[e^{-\iota k\varphi}\right]=\int_{0}^{2\pi}e^{-\iota k\theta}\rho\left(\theta\right)d\theta=2\pi\hrho[k].
\label{fourier_of_rho}
\end{equation}
Therefore,
\begin{align}
    M_1[k]&=2\pi \hx[k]\hrho[k] , \\
    M_{2}[k_{1},k_{2}]&=2\pi \hx[k_1]\hx[k_2]^{*}\hrho[k_{1}-k_{2}]+\sigma^{2}\delta_{k_1,k_2},
\end{align}
concluding the proof.
\end{proof}

\section{On the 1-D Frequency Marching Algorithm} \label{proof:FM_1D_all}
\subsection{Proof of Proposition~\ref{prop:FM_1D}} 
\label{proof:FM_1D}
\begin{proof}
     First, notice that \(D_{M_1}=2\pi D_{\hx}D_{\hrho}\), so \(D_{\hx}=\frac{1}{2\pi}D_{M_1}D_{\hrho}^{-1}\). The de-biased $M_2$ can therefore be re-written as
    \begin{equation}
    M_2=\frac{1}{2\pi}D_{M_1}D_{\hrho}^{-1} T_{\rho} D_{\hrho^{*}}^{-1}D_{M_1^{*}}.
    \end{equation}
    This means that conjugating $M_2$ by $D_{M_1}^{-1}$ removes the dependency on $\hx$. Thus, $S$ is 
    \begin{equation} \label{S_def}
        S \triangleq 2\pi D_{M_1}^{-1} M_2 D_{M_1^{*}}^{-1}=D_{\hrho}^{-1} T_{\rho} D_{\hrho^{*}}^{-1} .
    \end{equation}
    Namely,
    \[ S[k_1,k_2] = \frac{\hrho[k_1-k_2]}{\hrho[k_1] \hrho^*[k_2]}, \: -B \leq k_1,k_2 \leq B .\]
    In particular, on the diagonal $ S[k,k] = \frac{\hrho[0]}{\hrho[k] \hrho^*[k]} = \frac{\hrho[0]}{\left|\hrho[k]\right|^{2}}$.
    In addition, since $\rho$ is a probability distribution, we have $$\hrho[0] = \frac{1}{2\pi}\int_{0}^{2\pi}\rho(\theta) d\theta=\frac{1}{2\pi}.$$ 
    
    We fix $\hrho_{\est}[1]$ to have a unit phase, $\hrho_{\est}[1]=|\hrho[1]| = \sqrt{\frac{1}{2 \pi S[1,1]}}$. Then, we find $\left\{\hrho[k]\right\}_{k=1}^{2B}$ in ascending order, using all previously calculated $\hrho[k]$'s and elements of $S$, via the following scheme; for any $ 2 \leq k \leq B$:
    \begin{equation}
        \hrho_{\est}[k] = \frac{\hrho_{\est}[1]}{S[k,k-1]\hrho_{\est}^*[k-1]}
    \end{equation}
    and for any $ B+1 \leq k \leq 2B$:
    \begin{equation}
        \begin{split}
        \hrho_{\est}[k] &= S[k-B,-B] \hrho_{\est}[k-B]\hrho_{\est}^*[-B]\\
        &= S[k-B,-B] \hrho_{\est}[k-B]\hrho_{\est}[B].
        \end{split}
    \end{equation}
    This process extracts $\hrho[k]$ from $S$ and determines $1 \leq k \leq 2B$, under the assumption that $\hrho_{\est}[1]$ is equal to $\hrho[1]$. The negative frequencies are decided by the realness of $\rho(\theta)$. Note that the above assumption reflects the freedom to choose an arbitrary rotation.
\end{proof}

\subsection{A robust frequency marching algorithm} \label{subsec:better_FM_1D}
Algorithm~\ref{alg:FM_1D} uses just one entry of $S$ for each frequency $k$, which is somewhat arbitrary, since for $0 \leq k \leq B$ we have $$\hat{{\rho}}[k] = \frac{\hrho[k-k']}{S[k,k']\hrho[k']^*}, \quad 1 \leq k'\leq k-1 .$$
For $B+1 \leq k \leq 2B$, we have 
$$\hrho[k] = S[k-k',-k']\hrho[k-k']\hrho[k'], \quad k-B \leq k'\leq B .$$ We next briefly present several modifications that define an improved and robust alternative algorithm, particularly relevant for practical applications involving the empirical moments \( M_{1,\text{est}} \) and \( M_{2,\text{est}} \).

Define two sets of weights $\left\{\omega_{k;k'}\right\}_{k'=1}^{k-1}$ and $\left\{\tilde{\omega}_{k;k'}\right\} _{k'=k-B}^{B}$, where  $\sum_{k'=1}^{k-1}\omega_{k;k'}=\sum_{k'=k-B}^{B}\tilde{\omega}_{k;k'}=1$. We use the diagonal of $S$ in each marching step to mitigate cascading errors and only for frequencies $0 \leq k \leq B$, where $S[k,k]$ exists. In particular, Step~\ref{step:2kB_FM_1D} takes the following form:
\begin{align}
    \tilde{\rho}_{\est}[k]&\triangleq \sum_{k'=1}^{k-1}\omega_{k;k'}\frac{\hrho_{\est}[k-k']}{S[k,k']\hrho_{\est}[k']^*} \, ,\\
    \hrho_{\est}[k]&=\frac{1}{\sqrt{2\pi S[k,k]}}\frac{\tilde{\rho}_{\est}[k]}{\left|\tilde{\rho}_{\est}[k]\right|} \, .
\end{align}
Similarly, Step~\ref{step:Bk2B_FM_1D} becomes for $B+1\leq k \leq 2B$:
\begin{equation}
    \hrho_{\est}[k]=\sum_{k'=k-B}^{B}\tilde{\omega}_{k;k'}S[k-k',-k']\hrho_{\est}[k-k']\hrho_{\est}[k'].
\end{equation}
$\hx$ is again recovered in the same way as before.

\section{Proof of the 1-D Spectral Algorithm}
\label{proof:spectral_algorithm_perfect_1D}
\begin{proof}
    Let $W\in\C^{(2B+1)^2}$ be the orthogonal DFT matrix. This proof uses the fact that any circulant matrix $C_v$ is diagonalized by $W$, i.e.,
    \begin{equation}
        C_v = W^* D_{\fft(v)} W .
    \end{equation}
    
    Consider $\mathcal{M}_2$ and rewrite Step~\ref{step:1D_mathcal_M_2}:
    \begin{equation} \label{eqn:eigen_decomp_1D}
    \begin{split}
        \mathcal{M}_2 &\triangleq D_{\frac{1}{\sqrt{P_{\hx}}}} M_2 D_{\frac{1}{\sqrt{P_{\hx}}}}\\
        &=  2\pi D_{\frac{1}{\sqrt{P_{\hx}}}}D_{\hx} C_{\vopt} D_{\hx}^{*}D_{\frac{1}{\sqrt{P_{\hx}}}}\\  
        &= 2\pi D_{\tx} C_{\vopt}  D_{\tx}^{*}\\
        &=2\pi WC_{\ifft(\tx)}W^{*} C_{\vopt} WC_{\ifft(\tx)}^{*}W^{*},
    \end{split}
    \end{equation}
    where $\ifft$ denotes the inverse DFT.
    Since $W^{*} C_{\vopt} W = \left(W C_{\vopt}^t W^{*}\right)^{t}$,  we have that $C_{\vopt}^t = C_{\tvopt}$ is a hermitian circulant matrix, defined by its first column,
    \begin{equation} \label{eqn:v_tilde_opt}
        \tvopt[k] = \vopt[-k \bmod 2B+1], \quad 0 \leq k \leq 2B.
    \end{equation}
It is known that 
    \begin{equation} \label{eqn:tilde_vopt_fft}
        \fft(\tvopt)[b] = \hvopt[-b \bmod 2B+1], \quad  0\leq b \leq 2B.
    \end{equation}
    This means that
    \begin{equation} \label{eqn:mathcal_m_2_eigenvalue_decomp}
        \mathcal{M}_2 = 2\pi  W C_{\ifft(\tx)} D_{\fft(\tvopt)}C_{\ifft(\tx)}^{*} W^*.
    \end{equation}
    In addition, $C_{\ifft(\tx)}$ is a unitary matrix, as
    \begin{equation*}
    \begin{split}
C_{\ifft(\tx)}C_{\ifft(\tx)}^{*}&=W^*WC_{\ifft(\tx)}W^*WC_{\ifft(\tx)}^{*}W^*W\\
        &=W^*D_{\tx}D_{\tx}^*W 
        =W^*D_{\frac{|\hx|^2}{P_{\hx}}}W=\I_{2B+1}.
    \end{split}
    \end{equation*}
We can, therefore, conclude that~\eqref{eqn:mathcal_m_2_eigenvalue_decomp} is an orthogonal eigenvalue decomposition, where the columns of $WC_{\ifft(\tx)}$ are the eigenvectors. 

In steps~\ref{step:mathcal_m2_eigenvalue_decomp_1D} and~\ref{step:unique_eigenvector_1D}, we find a normalized eigenvector $v_{\kappa}$ of $\mathcal{M}_2$ corresponding to a non-degenerate eigenvalue, known to exist from the assumption of $\hvopt$ having an isolated entry. The non-degenerate eigenvector $v_{\kappa}$ must then be equal to a column of $WC_{\ifft(\tx)}$ up to some global phase, i.e.,
\begin{equation} \label{eqn:fft1_l_0_2B}
    v_{\kappa} = \alpha W \left( l\circ\ifft(\tx) \right) = \frac{\alpha}{\sqrt{2B+1}} \fft \left( l\circ\ifft(\tx) \right),
\end{equation}
for some $0 \leq l \leq 2B$ and $\abs{\alpha}=1$, where $\circ$ represents a circular shift of a vector. This means that $v_{\kappa}$ is the same as $\tx$, up to normalization by $\sqrt{2B+1}$ and multiplication by a phase. In the image, it means a rotation by $\varphi=\frac{2\pi l}{2B+1}$ and a possible global phase. Finally, $\hrho$ is recovered through $M_1$.
\end{proof}

\section{Proof of Bound on the 1-D Spectral Algorithm}
\label{proof:spectral_bound_1D}
\begin{proof}[Proof of Theorem~\ref{thm:spectral_bound_1D}]
By Appendix~\ref{proof:spectral_algorithm_perfect_1D}, there exists $l \in \{0,\ldots,2B\}$ s.t. the vector
    \begin{equation}
        v_{\kappa}^C = \frac{1}{\sqrt{2B+1}}\Phi_{\frac{2\pi l}{2B+1}} \odot \tx,
    \end{equation}
    is the unique normalized eigenvector of 
    \begin{equation}
        \mathcal{M}_{2}^{C} \triangleq 2\pi W C_{\ifft(\tx)}W^{*} C_{\vopt} W C_{\ifft(\tx)}^{*} W^*,
    \end{equation}
    with eigenvalue $2\pi\lambda_{\kappa}^C$, where we recall that $2\pi \lambda^T$ and $2\pi \lambda^C$ correspond to eigenvalues of $\mathcal{M}_2$ and $\mathcal{M}_2^C$, which differ from $T_{\rho}$, $C_{\vopt}$ only by conjugation by the same unitary matrix and a constant. Then,
    \begin{equation} \label{eqn:1D_error_bound_1st_part}
    \begin{split}
        &\norm{\hx_{\est}-\Phi_{\frac{2\pi l}{2B+1}}\odot \hx}^2_F \\
         = & \norm{\sqrt{P_{\hx}} \odot \left[ \tx_{\est} - \sqrt{2B+1} v_{\kappa}^C\right]}^2_F\\
         \leq & \left[\max_{0\leq k \leq B} P_{\hx}[k]\right] \norm{\tx_{\est} - \sqrt{2B+1} v_{\kappa}^C}^2_F\\
          = & (2B+1) \left[\max_{0\leq k \leq B} P_{\hx}[k]\right] \norm{\beta v_{\kappa}^T- v_{\kappa}^C}^2_F.\\
    \end{split}
    \end{equation}
    
    To further bound $\norm{\beta v_{\kappa}^T- v_{\kappa}^C}^2_F$, we are going to require the following lemmas.
    \begin{lemma} \label{lem:a_est_conj_flip_1D}
        $\tx_{\est}$ has the following property:
        \begin{equation}
            \tx_{\est}[k] = \tx_{\est}^*[-k].
        \end{equation}
    \end{lemma}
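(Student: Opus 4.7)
The plan is to trace the conjugate-flip symmetry $\hy_i[k] = \hy_i^*[-k]$ (which is forced by the realness of $x$ together with the conjugation rule on the noise) through every step of Algorithm~\ref{alg:spectral_algorithm_1D}, and then use the non-degeneracy of $\lambda_\kappa$ to pin down $\tx_{\est}$ up to a phase that the $\beta$ correction eliminates.

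First I would show that the matrix $\mathcal{M}_2$ constructed in Step~\ref{step:1D_mathcal_M_2} satisfies $\mathcal{M}_2[-k_1,-k_2] = \mathcal{M}_2^*[k_1,k_2]$. Since $\hy_i[k] = \hy_i^*[-k]$ on every realization, the empirical second moment obeys $M_{2,\est}[-k_1,-k_2] = M_{2,\est}^*[k_1,k_2]$; the debiasing step preserves this. The diagonal entries satisfy $M_{2,\est}[k,k] = M_{2,\est}[-k,-k]$, so $P_{\hx}[k] = P_{\hx}[-k]$, and the normalization by $D_{1/\sqrt{P_{\hx}}}$ therefore commutes with the flip $k \mapsto -k$. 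Writing $F$ for the flip operator $(Fv)[k] = v[-k]$, this is the operator identity $F\mathcal{M}_2^* F = \mathcal{M}_2$.

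Next I would exploit this symmetry at the level of eigenvectors. Taking the complex conjugate of $\mathcal{M}_2 v_\kappa = \lambda_\kappa v_\kappa$ (with $\lambda_\kappa\in\R$ since $\mathcal{M}_2$ is Hermitian) and conjugating by $F$ gives $\mathcal{M}_2(Fv_\kappa^*) = \lambda_\kappa (Fv_\kappa^*)$. By the non-degeneracy hypothesis (condition 2 of Theorem~\ref{thm:spectral_bound_1D}, consistent with Step~\ref{step:unique_eigenvector_1D}), the $\lambda_\kappa$-eigenspace is one-dimensional, so there exists $c\in\C$ with $|c|=1$ such that $F v_\kappa^* = c\, v_\kappa$. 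Evaluating at $k=0$, writing $v_\kappa[0] = |v_\kappa[0]|e^{\iota\phi_0}$, we obtain $c = v_\kappa^*[0]/v_\kappa[0] = e^{-2\iota\phi_0}$.

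Finally I would check that the phase $\beta$ from the algorithm is exactly a square root of $c$. Since $\hy_i[0] = \hy_i^*[0]$, we have that $M_{1,\est}[0]$ is real, so $\measuredangle M_{1,\est}[0]\in\{0,\pi\}$; the definition $\beta = e^{\iota(\measuredangle M_{1,\est}[0] - \phi_0)}$ then gives $\beta = \pm e^{-\iota\phi_0}$ and hence $\beta^2 = e^{-2\iota\phi_0} = c$ in either case. Putting it together,
\begin{equation*}
\tx_{\est}^*[-k] = \beta^* \sqrt{2B+1}\, v_\kappa^*[-k] = \beta^* c \sqrt{2B+1}\, v_\kappa[k] = \beta\, \sqrt{2B+1}\, v_\kappa[k] = \tx_{\est}[k],
\end{equation*}
where I used $\beta^* c = \beta^*\beta^2 = \beta$. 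The main obstacle is the last step: verifying that the algorithmic choice of $\beta$ (which is defined only from $M_{1,\est}[0]$ and $\tx_{\est}[0]$, without explicit reference to the symmetry) lands on the correct square root of the ambiguity $c$ dictated by the eigenvector problem. This is precisely what the realness of $M_{1,\est}[0]$ is used for.
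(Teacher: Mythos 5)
Your proof is correct and follows essentially the same route as the paper's: both establish that the conjugate--flip of an eigenvector of $\mathcal{M}_2$ is again an eigenvector with the same eigenvalue, invoke the non-degeneracy of $\lambda_\kappa$ to conclude proportionality by a unimodular constant, and use the realness of the zero-frequency entry to fix that constant to one. The only differences are cosmetic: you derive the flip symmetry $F\mathcal{M}_2^*F=\mathcal{M}_2$ from the observation-level identity $\hy_i[k]=\hy_i^*[-k]$ rather than by the paper's entrywise computation on the population form $2\pi D_{\tx}T_\rho D_{\tx}^{*}$, and you track the phase explicitly through $\beta^2=c$, where the paper simply observes that the realness of $\tx_{\est}[0]\neq 0$ forces the proportionality constant to equal $1$.
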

    \begin{proof}
    
    Let $v$ be some eigenvector of $\mathcal{M}_2 = 2\pi D_{\tx} T_{\rho} D_{\tx}^{*}$ with eigenvalue $\lambda$. Consider the vector $v'$ defined by $v'[k] = v^*[-k]$:
    \begin{equation}
    \begin{split}
        &\left[2\pi D_{\tx} T_{\rho} D_{\tx}^{*} v'\right][k_1] = \\
        & \sum_{k_2=-B}^{B} 2\pi \tx[k_1] \tx[k_2]^{*} \hrho[k_1-k_2] v^{*}[-k_2]=\\
        & \sum_{k_2'=-B}^{B} 2\pi \tx[k_1] \tx[-k_2']^{*} \hrho[k_1+k_2'] v^{*}[k_2'] =\\
        & \left[\sum_{k_2'=-B}^{B} 2\pi \tx[-k_1] \tx[k_2']^{*} \hrho[-k_1-k_2'] v[k_2']\right]^{*} =\\
        &\left[\lambda v[-k_1]\right]^{*} = \lambda v'[k_1].\\
    \end{split}
    \end{equation}
    Namely, $v'$ is an eigenvector of $\mathcal{M}_2$ with eigenvalue $\lambda$. Since $\tx_{\est}$ is an eigenvector of $\mathcal{M}_2$ corresponding to an isolated eigenvalue:
    \begin{equation}
        \tx_{\est}[k] = \alpha \tx_{\est}^*[-k]
    \end{equation}
    for some $\alpha\in\C$. The realness of $\tx_{\est}[0]\neq 0$ forces $\alpha =1$, which completes our proof.
    \end{proof}
    \begin{lemma} \label{lemma:conj_flip_eq_1D}
        Let $v,u\in \C^{2B+1}$, s.t. $v[k] = v^*[-k]$ and $u[k] = u^*[-k]$. Then, $u^*v\in\R$.
        \end{lemma}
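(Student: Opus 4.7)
The plan is to show directly that $u^{*}v$ equals its own complex conjugate, hence is real. Writing out the inner product as the sum $u^{*}v = \sum_{k=-B}^{B} u^{*}[k]\,v[k]$, I would take the complex conjugate of both sides to get $(u^{*}v)^{*} = \sum_{k=-B}^{B} u[k]\,v^{*}[k]$.

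Next I would invoke the two hypotheses $u[k] = u^{*}[-k]$ and $v^{*}[k] = v[-k]$ (the latter is just the conjugate of the stated symmetry on $v$). Substituting these into the conjugated sum gives $(u^{*}v)^{*} = \sum_{k=-B}^{B} u^{*}[-k]\,v[-k]$. A change of index $k' = -k$, which merely permutes the symmetric summation range $\{-B,\ldots,B\}$, then identifies this sum with $\sum_{k'} u^{*}[k']\,v[k'] = u^{*}v$. Since $(u^{*}v)^{*} = u^{*}v$, the product lies in $\R$.

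The proof is essentially a one-line manipulation, so there is no real obstacle; the only care needed is bookkeeping to make sure the reindexing preserves the range of summation (which is immediate because the index set is symmetric about zero) and that the two uses of the Hermitian symmetry are applied correctly on the conjugated expression rather than the original. This lemma will then combine with Lemma~\ref{lem:a_est_conj_flip_1D} to guarantee that the inner product $\tx_{\est}^{*}[\Phi_{2\pi l/(2B+1)}\odot\tx]$ appearing in hypothesis~(3) of Theorem~\ref{thm:spectral_bound_1D} is a real number, which is what lets the sign condition in that hypothesis be meaningful and enables the subsequent Davis--Kahan-based bound on $\|\beta v_{\kappa}^{T} - v_{\kappa}^{C}\|_F^{2}$ to close out the estimate in~\eqref{eqn:1D_error_bound_1st_part}.
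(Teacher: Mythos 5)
Your proof is correct, but it takes a genuinely different route from the paper. You argue directly at the level of the sum: conjugate $u^{*}v = \sum_{k=-B}^{B} u^{*}[k]v[k]$, apply the two Hermitian symmetries, and reindex $k\mapsto -k$ over the symmetric range to recover $u^{*}v$ itself, so $(u^{*}v)^{*}=u^{*}v$. The paper instead observes that the symmetry $v[k]=v^{*}[-k]$ is exactly the condition for the trigonometric polynomial $f_{v}(\theta)=\sum_{k}v[k]e^{\iota k\theta}$ to be real-valued, and then invokes Parseval's theorem to write $u^{*}v = \frac{1}{2\pi}\int_{0}^{2\pi}f_{u}^{*}(\theta)f_{v}(\theta)\,d\theta$, which is real as an integral of a product of real functions. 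Your argument is more elementary and self-contained (no appeal to Parseval or to the signal-domain picture), while the paper's version makes the conceptual reason transparent — these vectors are Fourier coefficients of real signals — and mirrors the 2-D analogue (Corollary~\ref{conj_flip_eq_2D}), where the same Fourier--Bessel/Parseval argument is reused. Your closing remarks about how the lemma feeds into condition~(3) of Theorem~\ref{thm:spectral_bound_1D} and the Davis--Kahan step match the paper's use of the result.
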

    \begin{proof}
        Under the lemma's condition, the two functions $$f_{u}(\theta)=\sum_{k=-B}^B u[k] e^{\iota k \theta} \text{ and } f_{v}(\theta)=\sum_{k=-B}^B v[k] e^{\iota k \theta},$$ are  real. Using Parseval's theorem, we get:
        \begin{equation}
            u^*v = \frac{1}{2\pi} \int_{0}^{2 \pi} f^*_{u}(\theta) f_{v}(\theta) \in \R.
        \end{equation}
    \end{proof}
    To further bound~\eqref{eqn:1D_error_bound_1st_part}, we note that $v_{\kappa}^C [k] = v_{\kappa}^{C*} [-k]$. Then, 
    \begin{equation}
    \begin{split}
       &\norm{\beta v_{\kappa}^T- v_{\kappa}^C}^2_F =\\
       & \norm{\beta v_{\kappa}^T}^2_F + \norm{v_{\kappa}^C}^2_F - 2\Re\left[ \left(\beta v_{\kappa}^T\right)^* v_{\kappa}^C \right] = \\
       &2 - 2\Re\left[ \left(\beta v_{\kappa}^T\right)^* v_{\kappa}^C \right] \stackrel{(*)}{=} 2 - 2\left[ \left(\beta v_{\kappa}^T\right)^* v_{\kappa}^C \right].
    \end{split}
    \end{equation}
    Where at (*) we employ both Lemma~\ref{lem:a_est_conj_flip_1D} and Lemma~\ref{lemma:conj_flip_eq_1D}. At this point we use that $\left(\beta v_{\kappa}^T\right)^* v_{\kappa}^C \geq 0$, guaranteed by the conditions of the theorem, to have
    \begin{equation} \label{eqn:fft_bound_1D}
    \begin{split}
        \norm{\beta v_{\kappa}^T- v_{\kappa}^C}^2_F &= 2 - 2\abs{ \left(\beta v_{\kappa}^T\right)^* v_{\kappa}^C } \\
        &= 2 \left[ 1-\abs{v_{\kappa}^{T*} v_{\kappa}^C} \right].
    \end{split}
    \end{equation}
    All we are now left with is finding lower bound on $\abs{v_{\kappa}^{T*} v_{\kappa}^C}$. To that end, we will use the Davis-Kahan $\sin\theta$ theorem, which states that
    \begin{equation} \label{eqn:davis_kahan}
        \abs{\sin\measuredangle(v^{C}_{\kappa},v^{T}_{\kappa})} \leq \frac{\norm{\mathcal{M}_{2}-\mathcal{M}_{2}^{C}}_{F}}{2\pi \delta_{\kappa}},
    \end{equation}
    with 
    \[ \delta_{\kappa}=\max\left(\min_{j \neq \kappa}{\abs{\lambda^{C}_{\kappa}-\lambda^{T}_{j}}}, \min_{j \neq \kappa}{\abs{\lambda^{C}_{j}-\lambda^{T}_{\kappa}}}\right).\]
    Since $\abs{\cos\measuredangle(v^{C}_{\kappa},v^{T}_{\kappa})} = \abs{v^{C*}_{\kappa}v^{T}_{\kappa}}$, we have that 
    \[ 
    \begin{split}
    \abs{\sin\measuredangle(v^{C}_{\kappa},v^{T}_{\kappa})} & = \sqrt{1-\cos^{2}\measuredangle(v^{C}_{\kappa},v^{T}_{\kappa})} \\
    &= \sqrt{1-\abs{v^{C*}_{\kappa}v^{T}_{\kappa}}^{2}}.
    \end{split}
    \] 
    Since $\frac{S_B(\hrho)}{\delta_{\kappa}^{2}} \leq 1$,
    \begin{equation} \label{eqn:v_C_v_T_bound_1D}
    \begin{split}
        \sqrt{1-\abs{v^{C*}_{\kappa}v^{T}_{\kappa}}^{2}} &\leq \frac{2\pi \norm{T_{\rho}-C_{\vopt}}_{F}}{2\pi \delta_{\kappa}}\\
        1-\abs{v^{C*}_{\kappa}v^{T}_{\kappa}}^{2} &\leq \frac{S_B(\hrho)}{\delta_{\kappa}^{2}}\\
        \sqrt{1-\frac{S_B(\hrho)}{\delta_{\kappa}^{2}}} &\leq \abs{v^{C*}_{\kappa}v^{T}_{\kappa}}.
    \end{split}
    \end{equation}
    After combining~\eqref{eqn:v_C_v_T_bound_1D} with~\eqref{eqn:fft_bound_1D}:
    \begin{equation} \label{eqn:diff_of_vecs_1D}
        \norm{\beta v_{\kappa}^T- v_{\kappa}^C}^2_F \leq 2\left[ 1 - \sqrt{1-\frac{S_B(\hrho)}{\delta_{\kappa}^{2}}} \right].
    \end{equation}
    Finally, substituting \eqref{eqn:diff_of_vecs_1D} back into \eqref{eqn:1D_error_bound_1st_part} completes our proof.
\end{proof}

\section{Calculations of Rotations in Fourier-Bessel Space} 
\label{proof:rotations_2D}

Proof of equation \ref{eqn:rotation_calc_2D}:
\begin{equation}
\begin{split}
    x(\theta-\varphi,r) &= \sum_{\left(k,q\right)\in I}\hx[k,q]u_{k,q}\left(\theta-\varphi,r\right)\\
    &=\sum_{\left(k,q\right)\in I}\hx[k,q] e^{\iota  k(\theta- \varphi)} J_{q}\left(r\right)\\
    &= \sum_{\left(k,q\right)\in I}\hx[k,q]e^{-\iota  k\varphi} e^{\iota  k\theta} J_{q}\left(r\right)\\
    &= \sum_{\left(k,q\right)\in I}\hx[k,q]e^{-\iota  k\varphi}u_{k,q}\left(\theta,r\right).
\end{split}
\end{equation}

In Fourier Bessel (F-B) space, the measurements $y_{j}\left(\theta,r\right)$ are of the general form:
\begin{equation} \label{eqn:model_expanded_2D}
\begin{split}
y_{j}\left(\theta,r\right)&=\sum_{\left(k,q\right)\in I}\hx[k,q]e^{\iota k\left(\theta-\varphi_{j}\right)}J_{q}\left(r\right)+\varepsilon_{j}\left(\theta,r\right)\\
    &=\sum_{\left(k,q\right)\in I}\hx[k,q]e^{-\iota k\varphi_{j}}e^{\iota k\theta}J_{q}\left(r\right)+\varepsilon_{j}\left(\theta,r\right) \\
    &=\sum_{\left(k,q\right)\in I}\hx[k,q]e^{-\iota  k\varphi_j}u_{k,q}\left(\theta,r\right)\\
    &+
    \sum_{(k,q)\in\Z\times\Z_{\geq 0}}\hat{\varepsilon}_j[k,q] u_{k,q}\left(\theta,r\right) \\
    &= \sum_{(k,q)\in I}\left[\hx[k,q]e^{-\iota k\varphi_j}+\hat{\varepsilon}_j[k,q]\right] u_{k,q}\left(\theta,r\right)+\\
    &+\sum_{(k,q)\in\Z\times\Z_{\geq 0}\backslash I}\hat{\varepsilon}_j[k,q] u_{k,q}\left(\theta,r\right),
\end{split}
\end{equation}
where $\varepsilon_j(\theta,r)=\sum_{(k,q)\in\Z\times\Z_{\geq 0}}\hat{\varepsilon}_j[k,q] u_{k,q}\left(\theta,r\right)$ is the F-B expansion of the noise $\varepsilon_j$.

\section{Proof of Lemma~\ref{lemma:Moment_Eq_2D}} \label{proof:Moment_Eq_2D}

\begin{proof}
    Using the fact that $\hx^{(j)}[k,q] = \hx[k,q]e^{-\iota k\varphi_j}+\hat{\varepsilon}_j[k,q]$ for $(k,q)\in I$:
    
    \begin{equation}
    \begin{split}
        &M_1[k,q]=\\
        &\E\left[\hx[k,q]e^{-\iota k\varphi}+\hat{\varepsilon}[k,q]\right] =\\
        &\hx[k,q]\E\left[e^{-\iota k \varphi}\right]+\E\left[\hat{\varepsilon}[k,q]\right]= \hx[k,q]\E\left[e^{-\iota k \varphi}\right].
    \end{split}
    \end{equation}
    
    \begin{equation}
    \begin{split}
        &M_{2}\left[k_{1},q_{1},k_{2},q_{2}\right]=   \\
        &\scalebox{.91}{$\mathbb{E}\left[\left(\hx[k_{1},q_{1}]e^{-\iota k_{1}\varphi}+\hat{\varepsilon}[k_{1},q_{1}]\right)\left(\hx[k_{2},q_{2}]e^{-\iota k_{2}\varphi}+\hat{\varepsilon}[k_{2},q_{2}]\right)^{*}\right]$}= \\
        &\scalebox{.98}{$\mathbb{E}\left[\hx[k_{1},q_{1}]\hx[k_{2},q_{2}]^{*}e^{-\iota k_{1}\varphi}e^{\iota k_{2}\varphi}\right]+\mathbb{E}\left[\hat{\varepsilon}[k_{1},q_{1}]\hat{\varepsilon}[k_{2},q_{2}]^{*}\right] $}=\\
        & \hx[k_{1},q_{1}]\hx[k_{2},q_{2}]^{*}\mathbb{E}\left[e^{-\iota\left(k_{1}-k_{2}\right)\varphi}\right]+ \sigma^{2}1[(k_1, q_1) = (k_2, q_2)].\\
    \end{split}
    \end{equation}
    Applying (\ref{fourier_of_rho}) concludes the proof.
\end{proof}

\section{On the 2-D Frequency Marching Algorithm}

First, we introduce Algorithm~\ref{alg:FM_2D}, which is the 2-D analog of Algorithm~\ref{alg:FM_1D}.
\begin{algorithm} \caption{ \label{alg:FM_2D}
 A frequency marching algorithm for the 2-D model}
    \begin{algorithmic}
     \item \textbf{Input:} $M_{1,\est}$, $M_{2,\est}$, and $\sigma$
     \item \textbf{Output:} $\hx_{\est}$, $\hrho_{\est}$
    \begin{enumerate}
        \item $M_{2,\est} \gets M_{2,\est} -  \sigma^2 \I_{\abs{I}}$ (debiasing)
        \item $\mathcal{S} \gets 2\pi D_{M_1}^{-1} M_{2,\est} D_{M_1^{*}}^{-1}$
        \For{$-2B\leq k_1, k_2 \leq 2B$}
        \item $S[k_1,k_2] \gets \mathcal{S}[k_1,q_1 = 0,k_2,q_2=0]$ \label{step:FM_2D_reduction_over_q}
        \EndFor
        \item $\hrho_{\est}[0] \gets \frac{1}{2 \pi}$
        \item $\hrho_{\est}[1] \gets \sqrt{\nicefrac{1}{2 \pi S[1,1]}}$
        \For{$2 \leq k \leq B$}
        \item $\hrho_{\est}[k] \gets \label{step:2kB_FM_2D}\frac{\hrho_{\est}[1]}{S[k,k-1]\hrho_{\est}^*[k-1]} $
        \EndFor
        \For {$B+1 \leq k \leq 2B$}
        \item \label{step:Bk2B_FM_2D}
        $\hrho_{\est}[k] \gets S[k-B,-B] \hrho_{\est}[k-B] \hrho_{\est}[B]$
        \EndFor
        \For{$-2B \leq k \leq -1$}
        \item
        $\hrho_{\est}[k] = \hrho_{\est}^*[-k]$
        \EndFor
        \For{$-B \leq k \leq B$, $0 \leq q \leq Q_k-1$}
        \item $\mathcal{R}_{\est}[k, q] = \hrho[k]$
        \EndFor
        \item $\hx_{\est} \gets \frac{M_1}{2\pi\mathcal{R}_{\est}}$
    \end{enumerate}
    \end{algorithmic}
\end{algorithm}

\begin{proof}[Proof of Proposition~\ref{prop:FM_2D}] \label{proof:FM_2D}
    The first steps are identical to the 1-D case.
    De-biasing is performed, after which we use the property of $D_{\hx}=\frac{1}{2\pi}D_{M_1}D_{\mathcal{R}}^{-1}$, resulting in the following:
    \begin{equation}
        M_{2}=\frac{1}{2\pi}D_{M_1}D_{\mathcal{R}}^{-1} \mathcal{T}_{\rho} D_{\mathcal{R}^{*}}^{-1}D_{M_1^{*}}.
    \end{equation}
    Now, define
    \begin{equation}
        \mathcal{S} \triangleq 2\pi D_{M_1}^{-1} M_{2} D_{M_1^{*}}^{-1}=D_{\mathcal{R}}^{-1} \mathcal{T}_{\rho} D_{\mathcal{R}^{*}}^{-1}.
    \end{equation}
 \(\mathcal{S}\) has a structure similar to that of (\ref{S_def}), with its elements repeated to form constant blocks:
    \begin{equation}
        \begin{split}
            (\mathcal{S})_{k_1,k_2} &= \frac{\hrho[k_1-k_2]}{\hrho[k_1] \hrho^*[k_2]} \mathds{1}_{Q_{\abs{k_1}}\times Q_{\abs{k_2}}} \\&= S[k_1, k_2] \mathds{1}_{Q_{\abs{k_1}}\times Q_{\abs{k_2}}}, \: -B \leq k_1,k_2 \leq B.
        \end{split}
    \end{equation}
    In step~\ref{step:FM_2D_reduction_over_q}, $\mathcal{S}$ is reduced over the radial dimension:
    \begin{equation}
        S[k_1,k_2] = \mathcal{S}[k_1,q_1 = 0,k_2,q_2=0]
        = \frac{\hrho[k_1-k_2]}{\hrho[k_1] \hrho^*[k_2]}.
    \end{equation}
    From here on, subsequent steps coincide with the application of Algorithm~\ref{alg:FM_1D} (steps~\ref{step:FM_1D_rho_0} through \ref{step:FM_1D_conjugate_fill}) on the resulting $S$, from which $\hrho$ is recovered. $\mathcal{R}$ is then constructed, and $\hx$ is recovered from $M_1$ using $\hx=\frac{M_1}{2\pi \mathcal{R}}$.
\end{proof}

\subsection{A robust 2-D frequency marching} \label{subsec:better_FM_2D}

The 2-D Frequency Marching algorithm follows a similar idea to its 1-D counterpart, but differs in the fact that the radial dimension is reduced as only elements of $M_2$ where $q=0$ are used for $S$.

We propose a different method of reduction, which is comprised of the application of an average over the redundant $q$'s. In particular, 
\begin{enumerate}
    \item 
    Define $\left\{\omega^{k_1,k_2}_{q_1,q_2}\right\}_{\substack{0 \leq q_1 \leq Q_{\abs{k_1}}\\ 0 \leq q_2 \leq Q_{\abs{k_2}}}}$ s.t. $\sum_{q_1,q_2} \omega^{k_1,k_2}_{q_1,q_2} = 1$. 
    \item 
    Replace step~\ref{step:FM_2D_reduction_over_q} with
\begin{equation}
    S[k_1, k_2] \gets \sum_{q_1,q_2} \omega^{k_1,k_2}_{q_1,q_2} \mathcal{S}[k_1,q_1,k_2,q_2].
\end{equation}
\end{enumerate}
Then, all other modifications discussed in Appendix~\ref{subsec:better_FM_1D} can be applied to the resulting matrix $S$.

\section{The 2-D Spectral Algorithm}
In this section, we present a 2-D spectral algorithm that extends Algorithm~\ref{alg:spectral_algorithm_1D} in a natural way from the 1-D setting. While the core ideas remain analogous, the 2-D case introduces several unique nuances, primarily due to the distinctive structure of $\mathcal{T}_{\rho}$, a block Toeplitz matrix whose blocks of varying sizes $Q_{\abs{k_1}} \times Q_{\abs{k_2}}$, each mirror the matrix $T_{\rho}$ from the 1-D formulation.

For simplicity, we first consider the case where
\begin{equation} \label{eqn:Q_equal}
	Q_k=Q , \quad  0 \leq k \leq B,
\end{equation}
and $|I| = Q(2B+1)$. Namely, each angular frequency has a similar radial bandwidth, which may be restrictive for natural images. All following results will assume \eqref{eqn:Q_equal}. 

We present the 2-D procedure in Algorithm~\ref{alg:spectral_algorithm_2D}.

\begin{algorithm} 
\caption{A Spectral Algorithm for the 2-D model}  \label{alg:spectral_algorithm_2D}
    \begin{algorithmic}
        \item \textbf{Input:} $M_1$, $M_2$, $\sigma$
        \item \textbf{Output:} $\hx_{\est}$, $\hrho_{\est}$
        \begin{enumerate}
            \item $M_2 \gets M_2 - \sigma^2 \I$ (debiasing) \label{step:debias_2D}
            \item $P_{\hx} \gets \diag(M_2)$
            \item $ \mathcal{M}_2 \gets D_{\frac{1}{\sqrt{P_{\hx}}}} M_2 D_{\frac{1}{\sqrt{P_{\hx}}}}$ \label{step:2D_mathcal_M_2}
            \item Find the eigenvalue decomposition of $\mathcal{M}_2$: non-zero eigenvalues $\lambda_0 \geq \lambda_1 \geq \ldots \geq \lambda_{K}$ with corresponding normalized eigenvectors $v_0, v_1, \ldots, v_{K}$. \label{step:mathcal_m2_eigenvalue_decomp_2D}
            \item $\kappa \gets \argmax_{0 \leq k \leq K} \min_{k' \neq k} \abs{\lambda_{k'}-\lambda_{k}}$ \label{step:unique_eigenvector_2D}
            \item $\tx_{\est} \gets \sqrt{(2B+1)Q}v_{\kappa}$ \label{step:fft_2D}
            \item $\beta \gets e^{\iota (\measuredangle M_1[k=0, q=0] - \measuredangle \tx_{\est}[k=0, q=0])}$ \label{step: phase_correction_2D}
            \item $\tx_{\est} \gets \beta \tx_{\est}$ \label{step:def_x_tilde_2D}
            \item $\hx_{\est} \gets  \sqrt{P_{\hx}}\odot \tx_{\est}$ \label{step:power_spectrum_back_2D}
            \item $\hrho_{\est} \gets \frac{M_1}{2 \pi \hx}$
        \end{enumerate}
    \end{algorithmic}
\end{algorithm}

Next, we explain some key steps in Algorithm~\ref{alg:spectral_algorithm_2D}. We start with updating our notation. 
Consider a block circulant matrix, that is, a matrix where each block is circulant. This is our generalization of a circulant matrix for the 2-D case. 

Let $A \in \C^{Q \times (2B+1)}$ be a matrix. We define $\vecb(A) \in \C^{Q(2B+1)\times 1}$ to be its vectorized version where $A$'s columns are stacked on top of each other. In addition, we define $\mat$ to be the inverse operator of $\vecb$, where $\mat$ splits a $Q(2B+1)\times 1$ column vector to a $Q \times (2B+1)$ matrix. Next, we define a $\BC$ (block - circulant) matrix to be a matrix of size $Q(2B+1)\times Q(2B+1)$ which is made of a $(2B+1) \times (2B+1)$ configuration of blocks with size $Q \times Q$, where each block is a circulant matrix, and the $(2B+1) \times (2B+1)$ formation is also circulant with respect to its block entries. We denote a block-circulant matrix with first column $\vecb(V)$ as $\BC(V)$. 

Let $W_{2B+1}\in \C^{(2B+1)^2}$ and $W_Q\in \C^{Q^2}$ be the normalized DFT matrices of their respective sizes, and let $\otimes$ be the Kronecker product. Define $W=W_{2B+1} \otimes W_Q$ to be the Kronecker product between $W_{2B+1}$ and $W_Q$. For any $v\in\C^{Q(2B+1)\times 1}$ and $V\in \C^{Q \times (2B+1)}$ we have:
\begin{align}
    \BC(V)&=W^{*} \diag(\vecb(\fftt(V))) W\\
    D_{v} &= W \BC(\ifftt(\mat(v))) W^*,
\end{align}
where $\fftt$ represents the 2-D DFT.

We return to Algorithm~\ref{alg:spectral_algorithm_2D}, regarding step~\ref{step:2D_mathcal_M_2} we have,
    \begin{equation} \label{eqn:2D_eigen_decomp}
    \begin{split}
        \mathcal{M}_2 &\triangleq D_{\frac{1}{\sqrt{P_{\hx}}}} M_2 D_{\frac{1}{\sqrt{P_{\hx}}}} \\
        &= 2\pi  D_{\frac{1}{\sqrt{P_{\hx}}}} \left(D_{\hx}\BC(\Vopt)D_{\hx^*}\right) D_{\frac{1}{\sqrt{P_{\hx}}}} \\
        &= 2\pi D_{\tx}\BC(\Vopt) D_{\tx}^{*} \\
        &= 2\pi W\mathcal{A} W^{*} \BC(\Vopt) W \mathcal{A}^* W^* .
    \end{split}\end{equation}Where we denote $\mathcal{A}=\BC(\ifftt(\mat(\tx)))$ for the sake of brevity. $\mathcal{A}$ is in fact a unitary matrix:\begin{equation}
    \begin{split}
        \mathcal{A}\mathcal{A}^{*} &= W^{*} W \mathcal{A} W^{*} W \mathcal{A}^{*} W^{*} W\\
        &= W^{*} \left(W \mathcal{A} W^{*}\right) \left(W \mathcal{A}^{*} W^{*}\right) W\\
        &= W^{*} (D_{\tx} D_{\tx}^{*}) W\\
        &= W^{*} \I W = \I.
    \end{split}
    \end{equation}
    Note that 
    \begin{equation}
         (W^{*} \BC(\Vopt) W)^t = W \BC(\tVopt) W^*,
    \end{equation}
        where $\tVopt \in \C^{Q\times (2B+1)}$  just consists of the vector $\tvopt^t$ (defined in equation~\eqref{eqn:v_tilde_opt}) duplicated over all $Q$ rows. Therefore,
    \begin{equation}
    \fftt(\tVopt) = \fftt\begin{bmatrix}
    \tvopt^t\\
    \tvopt^t\\
    \vdots\\
    \tvopt^t
    \end{bmatrix}
    = Q \begin{bmatrix}
    \fft(\tvopt^t)\\
    0_{Q-1 \times (2B+1)},
    \end{bmatrix}
    \end{equation}
    which implies that 
    \begin{equation} \label{eqn:diagonalization_BCCB_2D}
    \begin{split}
    W \BC(\tVopt) W^* &= \diag(\vecb(\fftt(\tVopt)))\\
    &= Q\diag \left[\vecb\begin{pmatrix}
    \fft(\tvopt^t)\\
    0_{Q-1 \times (2B+1)}
    \end{pmatrix}\right].
    \end{split}
    \end{equation}
    
    After substituting \eqref{eqn:diagonalization_BCCB_2D} into \eqref{eqn:2D_eigen_decomp}, we see that~\eqref{eqn:2D_eigen_decomp} constitutes a unitary eigenvalue decomposition. Note that $\mathcal{M}_2$ is of low rank (if $Q>1$), meaning that an isolated eigenvalue exists from the assumption on $\hvopt$. 
    
    In steps \ref{step:mathcal_m2_eigenvalue_decomp_2D} and \ref{step:unique_eigenvector_2D}, we pick an isolated eigenvector, which must appear as a column of $W\mathcal{A}$, up to a constant phase. This phase is set later in step \ref{step: phase_correction_2D} using the phase of $M_1[k=0, q=0] = x[k=0, q=0]$. 
    
    Step \ref{step:fft_2D} salvages the correct vector of phases $\tx_{\est}$, up to some global rotation. 
    It is clear to see from (\ref{eqn:diagonalization_BCCB_2D}) that for some $0 \leq l_1 \leq Q-1$ and $0 \leq l_2 \leq 2B$, the eigenvalue corresponding to the $(l_1+l_2Q)$'th column of $W\mathcal{A}$ is $0$ if $l_1\neq 0$. This property guarantees that an eigenvector of $\mathcal{M}_2$ with a non-degenerate, non-zero eigenvalue is
    \begin{equation} \label{eqn:fft2_0_l2}
        \begin{split}
        v_{\kappa} &= \alpha W \vecb\left((0,l_2)\circ \ifftt(\mat(\tx))\right) \\
        &= \frac{\alpha}{\sqrt{(2B+1)Q}} \fftt\left((0,l_2)\circ \ifftt(\mat(\tx))\right).
        \end{split}
    \end{equation}
    Here $\abs{\alpha} = 1$. Similarly to the 1-D case in \eqref{eqn:fft1_l_0_2B}, $v_{\kappa}$ is equivalent to $\tx$ up to a factor of $\sqrt{(2B+1)Q}$, multiplied by a phase which rotates the image by $\varphi = \frac{2\pi}{2B+1}l_2$, up to an arbitrary constant phase.
    
    In Step~\ref{step:power_spectrum_back_2D}, the power spectrum is introduced back. Finally, the distribution is recovered through $M_1$.
    
The following proves the correctness of the algorithm.
\begin{prop}
\label{prop:spectral_algorithm_perfect_2D}
   Assume that $S_B(\hrho)=0$, $\hx$ is non-vanishing and $\hvopt$ has a non-zero isolated value. Then, given exact $M_1$, $M_2$ and $\sigma$, Algorithm~\ref{alg:spectral_algorithm_2D} exactly recovers $\hx$ and the first $B$ Fourier coefficients of $\rho$, up to a global rotation. 
\end{prop}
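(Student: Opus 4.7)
The plan is to mirror the proof of Proposition~\ref{prop:spectral_algorithm_perfect_1D}, adapting each step to the block structure present in the 2-D setting. First I would invoke the assumption $S_B(\hrho)=0$ to conclude that, within each inner $Q\times Q$ block of $\mathcal{T}_\rho$, the entries are trivially circulant (the constant block $\mathds{1}_{Q\times Q}$ is itself circulant), and the outer $(2B+1)\times(2B+1)$ block arrangement is circulant in its blocks thanks to $T_\rho = C_{\vopt}$. Hence $\mathcal{T}_\rho = \BC(\Vopt)$, where $\Vopt \in \C^{Q\times(2B+1)}$ is obtained from $\vopt$ by replicating it across the $Q$ rows, in the natural way.

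Next I would carry out the conjugation of Step~\ref{step:2D_mathcal_M_2} as in~\eqref{eqn:2D_eigen_decomp}, which replaces $D_{\hx}$ by $D_{\tx}$. Using the identities tying $D_{v}$ and $\BC(\cdot)$ to the Kronecker DFT $W = W_{2B+1}\otimes W_Q$, together with the unitarity of $\mathcal{A}=\BC(\ifftt(\mat(\tx)))$ (established just before the proposition statement), one obtains that $\mathcal{M}_2 = 2\pi W \mathcal{A} W^* \BC(\Vopt) W \mathcal{A}^* W^*$ is a unitary eigen-decomposition. Substituting~\eqref{eqn:diagonalization_BCCB_2D} shows that the diagonal is $Q$ times $\vecb$ of $\fft(\tvopt^t)$ padded by zeros, so the non-zero eigenvalues are exactly $\{2\pi Q\,\hvopt[-b \bmod (2B+1)]\}_{b=0}^{2B}$, each attached to a specific column of $W\mathcal{A}$.

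The assumption that $\hvopt$ has a non-zero isolated value then guarantees that Step~\ref{step:unique_eigenvector_2D} selects a normalized eigenvector $v_\kappa$ that, up to a unit scalar $\alpha$, coincides with a single column of $W\mathcal{A}$; the low-rank degeneracy on the zero eigenvalues is handled because the selection over pairwise gaps picks out the isolated non-zero eigenvalue. By~\eqref{eqn:fft2_0_l2}, $v_\kappa = \alpha\,(2B+1)^{-1/2}Q^{-1/2}\fftt\bigl((0,l_2)\circ\ifftt(\mat(\tx))\bigr)$ for some $0\leq l_2\leq 2B$, which is $\tx$ rotated by $\varphi = 2\pi l_2/(2B+1)$ and multiplied by the scalar $\alpha$. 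Step~\ref{step: phase_correction_2D} pins $\alpha$ via the phase of $M_1[0,0]$, yielding $\tx$ up to a global rotation; Step~\ref{step:power_spectrum_back_2D} reintroduces the Fourier--Bessel magnitudes to give $\hx_{\est}$; and the distribution is recovered from $M_1$, noting that only the first $B$ Fourier coefficients of $\rho$ enter $\vopt$ and hence are identifiable from the second moment.

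The main obstacle will be keeping the Kronecker--product bookkeeping and the cyclic shift $(0,l_2)\circ$ consistent, so that the column of $W\mathcal{A}$ chosen by the non-zero, isolated eigenvalue truly encodes the same angular rotation in the image domain. A secondary delicate point is verifying that the $\argmax$ selection in Step~\ref{step:unique_eigenvector_2D} is well-posed in spite of the abundance of zero eigenvalues; here I would use that, by the assumption on $\hvopt$, the isolated value produces a strict gap to every other non-zero eigenvalue and to zero, so $\kappa$ is defined unambiguously.
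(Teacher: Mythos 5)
Your proposal follows essentially the same route as the paper's own argument (which appears as the discussion preceding the proposition): establish $\mathcal{T}_\rho=\BC(\Vopt)$ from $S_B(\hrho)=0$, diagonalize via the Kronecker DFT $W$ and the unitary $\mathcal{A}$, identify the non-zero eigenvalues with $2\pi Q\,\hvopt[-b \bmod (2B+1)]$, select the isolated eigenvector, fix the phase from $M_1[0,0]$, and recover $\hrho$ from $M_1$. The only quibble is the closing remark that the first $B$ coefficients of $\rho$ are ``identifiable from the second moment'' via $\vopt$ --- they are recovered from $M_1$ (which only contains $\hrho[k]$ for $|k|\leq B$), as you correctly state in the same sentence, so this does not affect the argument.
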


As per the 1-D case, the assumption $S_B(\hrho) = 0$ is not always met. Therefore, we want to derive an analog for Theorem~\ref{thm:spectral_bound_1D} in the 2-D case.

Since we wish to approximate $\mathcal{T}_{\rho}$ with a $\BC$ matrix, we shall consider the following minimization problem:
\begin{equation}
    \min_{V \in \C^{Q \times 2B+1}} \norm{\mathcal{T}_{\rho}-\BC(V)}_F^2.
\end{equation}
Note that $\mathcal{T}_{\rho}$ consists of constant blocks, 
where the minimum is achieved by $\Vopt \in \C^{Q\times(2B+1)}$ which consists of the vector $\vopt^t$ of~\eqref{eqn:v_opt} repeated over the $Q$ rows. 
The minimum is 
\begin{equation}
    \norm{\mathcal{T}_{\rho}-\BC(\Vopt)}_F^2 = Q^2 \norm{T-C_{\vopt}}_F^2 = Q^2 S_B(\hrho),
\end{equation}

We are ready for the 2-D version of Theorem~\ref{thm:spectral_bound_1D}.
\begin{theorem} \label{thm:spectral_bound_2D}
   Consider $\lambda^{T}_0 \geq \lambda^{T}_1 \geq \ldots \geq \lambda^{T}_{K_T}$ and $\lambda^{C}_0 \geq \lambda^{C}_1 \geq \ldots \geq \lambda^{C}_{K_C}$ to be the non-zero eigenvalues of $\mathcal{T}_{\rho}$ and $\BC(\Vopt)$ respectively. We write $P_{\max}:=\max_{(k,q)\in I}P_{\hx}[k, q]$ and $\delta_{\kappa}=\displaystyle \max\left(\min_{j \neq \kappa}{\abs{\lambda^{C}_{\kappa}-\lambda^{T}_{j}}}, \min_{j \neq \kappa}{\abs{\lambda^{C}_{j}-\lambda^{T}_{\kappa}}}\right)$.
   Assume the following conditions hold:
   \begin{enumerate}
       \item 
       $\hx$ is non-vanishing.
       \item 
       $\kappa$ of Step~\ref{step:unique_eigenvector_2D} of Algorithm~\ref{alg:spectral_algorithm_2D} has both $\lambda_{\kappa}^{T}$ and $\lambda_{\kappa}^{C}$ with an eigenspace of dimension 1.
       \item 
       The estimate calculated in Step~\ref{step:def_x_tilde_2D}  of Algorithm~\ref{alg:spectral_algorithm_2D} satisfies $\tx_{\est}^*[\Psi_{\frac{2\pi l}{2B+1}} \odot\tx] \geq 0$.
       \item 
       $Q^2 S_B(\hrho) \leq \delta_{\kappa}^2$.
   \end{enumerate} 
   Then, there exists $ 0 \leq l \leq 2B$, such that
   \begin{equation} \label{eqn:2D_error_bound_equation}
       \resizebox{0.95\linewidth}{!}{$\norm{\hx_{\est} - \Psi_{\frac{2\pi l}{2B+1}} \odot \hx}^2_F \leq 
       2Q(2B+1) \displaystyle P_{\max} \Big[1-\sqrt{1-\frac{Q^2 S_B(\hrho)} {\delta_{\kappa}^2}}\Big]$},
    \end{equation}
    where $\Psi_{\varphi}[k,q] = e^{- \iota k \varphi}$. 
\end{theorem}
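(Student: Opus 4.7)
\section*{Proof plan for Theorem~\ref{thm:spectral_bound_2D}}

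The plan is to mirror the 1-D argument from Appendix~\ref{proof:spectral_bound_1D}, but track the extra $Q$-factors that arise from the block-repetition structure of $\mathcal{T}_\rho$ and $\BC(\Vopt)$. First, by the derivation preceding Proposition~\ref{prop:spectral_algorithm_perfect_2D} applied to the idealized matrix $\mathcal{M}_2^C \triangleq 2\pi D_{\tx} \BC(\Vopt) D_{\tx^*}$, the unique normalized eigenvector corresponding to its isolated non-zero eigenvalue equals
\[
v_\kappa^C \;=\; \frac{1}{\sqrt{Q(2B+1)}}\, \Psi_{\frac{2\pi l}{2B+1}} \odot \tx,
\]
for some $0 \le l \le 2B$. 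I would then peel off the power spectrum exactly as in~\eqref{eqn:1D_error_bound_1st_part}: since $\hx_{\est}-\Psi_{\frac{2\pi l}{2B+1}}\odot\hx = \sqrt{P_{\hx}}\odot(\tx_{\est}-\sqrt{Q(2B+1)}\,v_\kappa^C)$ and $\tx_{\est}=\sqrt{Q(2B+1)}\beta v_\kappa^T$, this gives
\[
\norm{\hx_{\est} - \Psi_{\frac{2\pi l}{2B+1}}\odot \hx}_F^2 \;\le\; Q(2B+1)\,P_{\max}\,\norm{\beta v_\kappa^T - v_\kappa^C}_F^2.
\]

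Next, I would reduce $\norm{\beta v_\kappa^T - v_\kappa^C}_F^2$ to $2-2|v_\kappa^{T*} v_\kappa^C|$. This requires 2-D analogues of Lemmas~\ref{lem:a_est_conj_flip_1D} and~\ref{lemma:conj_flip_eq_1D}: one verifies directly from the block structure of $\mathcal{T}_\rho$ (each block $\hrho[k_1-k_2]\mathds{1}_{Q\times Q}$ satisfies the same conjugate-flip symmetry under $k\mapsto -k$, while the radial index is untouched) that if $v$ is an eigenvector of $\mathcal{M}_2$, then $v'[k,q]=v^*[-k,q]$ is one too, with the same eigenvalue; the non-degeneracy hypothesis then forces $\tx_{\est}[k,q]=\tx_{\est}^*[-k,q]$ after the phase correction in Step~\ref{step: phase_correction_2D}. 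The same symmetry is enjoyed by $v_\kappa^C$ by its explicit formula. A Parseval argument over the Fourier-Bessel expansion then implies $v_\kappa^{T*} v_\kappa^C \in \mathbb{R}$, and condition~(3) of the theorem makes it non-negative, so $\norm{\beta v_\kappa^T - v_\kappa^C}_F^2 = 2(1-|v_\kappa^{T*}v_\kappa^C|)$.

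The heart of the argument is then an application of the Davis-Kahan $\sin\theta$ theorem, which bounds
\[
\sqrt{1-|v_\kappa^{T*}v_\kappa^C|^2} \;=\; |\sin\measuredangle(v_\kappa^T,v_\kappa^C)| \;\le\; \frac{\norm{\mathcal{M}_2-\mathcal{M}_2^C}_F}{2\pi\delta_\kappa}.
\]
Here the key computation is to transfer the perturbation across the (unitary, since $|\tx[k,q]|=1$) diagonal conjugation and to invoke the block-repetition identity $\norm{\mathcal{T}_\rho-\BC(\Vopt)}_F^2 = Q^2 S_B(\hrho)$ already recorded in the excerpt. This yields $\norm{\mathcal{M}_2-\mathcal{M}_2^C}_F = 2\pi Q\sqrt{S_B(\hrho)}$, so $|v_\kappa^{T*}v_\kappa^C| \ge \sqrt{1-Q^2 S_B(\hrho)/\delta_\kappa^2}$ under hypothesis~(4). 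Substituting back and combining with the power-spectrum inequality gives~\eqref{eqn:2D_error_bound_equation}.

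The main obstacle I anticipate is the 2-D conjugate-flip step: in the 1-D proof the symmetry $v[k]=v^*[-k]$ followed transparently from the Toeplitz symmetry of $T_\rho$, but in 2-D one must carefully check that the block structure (constant blocks of size $Q\times Q$) does not interact with the $q$-index and that the chosen isolated eigenvector of $\mathcal{M}_2$ inherits the symmetry after the single-entry phase alignment in Step~\ref{step: phase_correction_2D}. Once this symmetry is established, the rest is essentially bookkeeping of the factors of $Q$ and $(2B+1)$.
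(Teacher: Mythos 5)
Your proposal follows essentially the same route as the paper's proof: identify $v_\kappa^C$ via the idealized matrix $\mathcal{M}_2^C = 2\pi D_{\tx}\BC(\Vopt)D_{\tx}^*$, peel off the power spectrum, establish the conjugate-flip symmetry $\tx_{\est}[k,q]=\tx_{\est}^*[-k,q]$ and the Parseval realness argument (the paper's Lemma~\ref{a_est_conj_flip_2D} and Corollary~\ref{conj_flip_eq_2D}), and close with Davis--Kahan using $\norm{\mathcal{T}_\rho-\BC(\Vopt)}_F^2=Q^2 S_B(\hrho)$. The steps, including the bookkeeping of the $Q$ and $(2B+1)$ factors, match the paper's argument, so the proposal is correct and not materially different.
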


\begin{proof}
    As per the proof of theorem \ref{thm:spectral_bound_1D}, we define $\mathcal{M}_2^C$ to be the matrix $\mathcal{M}_2$ would have been if $\mathcal{T}_{\rho}=\BC(\Vopt)$, meaning that 
    \begin{equation}
        \mathcal{M}_{2}^{C}\triangleq 2\pi W \mathcal{A}W^{*} \BC(\Vopt) W \mathcal{A}^{*} W^*.
    \end{equation}
    Both $\mathcal{M}_{2}^{C}$ and $\mathcal{M}_{2}$ differ only by conjugation by \emph{the same} unitary matrix. Therefore,
    \begin{equation} \label{eqn:fro_eq_2D}
        \norm{\mathcal{M}_2^C - \mathcal{M}_2}_F^2 = (2\pi)^2 \norm{\mathcal{T}_{\rho}-\BC(\Vopt)}_F^2 = (2 \pi)^2 Q^2 S_B(\hrho) .
    \end{equation}
    The eigenvalues of $\mathcal{M}_2$, $\mathcal{M}_2^C$ are equivalent to those of $\mathcal{T}_{\rho}$, $\BC(\Vopt)$, multiplied by  $2\pi$, respectively. Thus, we have $$2\pi\lambda^{T}_0 \geq 2\pi\lambda^{T}_1 \geq \ldots \geq 2\pi\lambda^{T}_{K_T} ,$$ 
    and 
    $$2\pi\lambda^{C}_0 \geq 2\pi\lambda^{C}_1 \geq \ldots \geq 2\pi\lambda^{C}_{K_C},$$ 
    as the non-zero eigenvalues of $\mathcal{M}_2$ and $\mathcal{M}_2^C$, respectively. Their corresponding normalized eigenvectors are $v_0^T,\ldots,v_{K_T}^T$ and $v_0^C,\ldots,v_{K_C}^C$, where $0 \leq K_T, K_C \leq 2B$. 
    
    Similar to Appendix~\ref{proof:spectral_bound_1D}, we simplify~\eqref{eqn:fft2_0_l2} to have
    \begin{equation}
        v_{\kappa}^C = \frac{1}{\sqrt{(2B+1)Q}} \Psi_{\frac{2 \pi l}{2B+1}} \odot \tx , 
    \end{equation}
    as the sole normalized eigenvector for $2 \pi\lambda_{\kappa}^C$, up to some global phase.
    Therefore, 
    \begin{equation} \label{eqn:vk_c_normilized}
        \sqrt{P_{\hx}} \odot v^C_{\kappa} = \frac{1}{\sqrt{(2B+1)Q}} \Psi_{\frac{2 \pi l}{2B+1}} \odot \hx.
    \end{equation}

    Now, consider the relation between $\hx_{\est}$ and (\ref{eqn:vk_c_normilized}):
    \begin{equation} \label{eqn:2D_error_bound_1st_part}
    \begin{split}
        &\norm{\hx_{\est} - \Psi_{\frac{2 \pi l}{2B+1}} \odot \hx}^2_F \\
         = &
         \norm{\sqrt{P_{\hx}} \odot \left[ \tx_{\est} -  \sqrt{(2B+1)Q} v_{\kappa}^C\right]}^2_F\\
         \leq & \left[\max_{(k,q)\in I} P_{\hx}[k,q]\right] \norm{\tx_{\est} - \sqrt{(2B+1)Q} v_{\kappa}^C}^2_F\\
          = & (2B+1) Q \left[\max_{(k,q)\in I} P_{\hx}[k,q]\right] \norm{\beta v_{\kappa}^T- v_{\kappa}^C}^2_F.
    \end{split}
    \end{equation}
    We next state the following lemma:
    \begin{lemma} \label{a_est_conj_flip_2D}
        $\tx_{\est}$ has the following property:
        \begin{equation}
            \tx_{\est}[k, q] = \tx_{\est}[-k, q]^*.
        \end{equation}
    \end{lemma}
    \begin{proof}
        Let $v$ be some eigenvector of $\mathcal{M}_2 = 2\pi D_{\tx}\mathcal{T}_{\rho}D_{\tx}^{*}$ with eigenvalue $\lambda$. Let $v'$ be a vector s.t. $v'[k, q] = v^*[-k, q]$, i.e., $v'$ is the conjugate of the vector $v$, where each angular frequency is flipped.
        \begin{equation}
        \begin{split}
            &\left[2\pi D_{\tx}\mathcal{T}_{\rho}D_{\tx}^{*} v'\right][k_1,q_1]= \\
            & \sum_{k_2=-B}^{B} \sum_{q_2=0}^{Q-1} 2\pi \tx[k_1,q_1] \tx[k_2,q_2]^{*} \hrho[k_1-k_2] \left[v[-k_2,q_2]\right]^*=\\
            & \sum_{k_2'=-B}^{B} \sum_{q_2=0}^{Q-1} 2\pi \tx[k_1,q_1] \tx[-k_2',q_2]^{*} \hrho[k_1+k_2'] \left[v[k_2',q_2]\right]^* =\\
            & \scalebox{0.87}{$\left[\sum_{k_2'=-B}^{B} \sum_{q_2=0}^{Q-1} 2 \pi  \tx[-k_1,q_1] \tx[k_2',q_2]^{*} \hrho[-k_1-k_2'] \left[v[k_2',q_2]\right]\right]^{*}$} =\\
            &\left[\lambda v[-k_1,q_1]\right]^{*} = \lambda v'[k_1,q_1].\\
        \end{split}
        \end{equation}
        Thus $v'$ is also an eigenvector of $\mathcal{M}_2$, with eigenvalue $\lambda$. Since $\tx_{\est}$ is an eigenvector of $\mathcal{M}_2$ corresponding to a non-degenerate eigenvalue, it must be that:
        \begin{equation}
            \tx_{\est}[k, q] = \alpha \tx_{\est}^*[-k, q],
        \end{equation}
        for some $\alpha\in\C$. The realness of $\tx_{\est}[0,0]\neq 0$ forces $\alpha = 1$, which completes our proof.
    \end{proof}
    \begin{coro} \label{conj_flip_eq_2D}
        Let $u,v\in\C^{ (2B+1)Q}$. If both $u[k, q] = u^*[-k, q]$ and $v[k, q] = v^*[-k, q]$,  then $u^*v\in\R$.
    \end{coro}
    \begin{proof}
        Under this assumption, we define the generating functions
        \begin{equation}
        \begin{split}
            f_U(\theta,r)=\sum_{(k,q)\in I} u[k,q] u_{k,q}(\theta,r)\\
            f_V(\theta,r)=\sum_{(k,q)\in I} v[k,q] u_{k,q}(\theta,r),
        \end{split}
        \end{equation}
        which the assumption insures are real functions.
        Using Parseval's theorem we get:
        \begin{equation}
            u^*v = \int_{D^2} f_u(\theta,r) f_v(\theta,r)drd\theta \in \R,
        \end{equation}
        where $D^2$ is the 2-dimensional unit disk.
    \end{proof}
    Following~\eqref{eqn:2D_error_bound_1st_part}, we note that $v_{\kappa}^C[k,q] = (v_{\kappa}^C[-k,q])^*$. We thus get:
    \begin{equation} \label{eqn:normdif}
    \begin{split}
       &\norm{\beta v_{\kappa}^T- v_{\kappa}^C}^2_F =\\
       &\norm{\beta v_{\kappa}^T}^2_F + \norm{v_{\kappa}^C}^2_F - 2\Re\left[(\beta v_{\kappa}^T)^* v_{\kappa}^C\right] = \\
       &2 -2\Re\left[(\beta v_{\kappa}^T)^* v_{\kappa}^C\right] \stackrel{(**)}{=} 2 - 2(\beta v_{\kappa}^T)^* v_{\kappa}^C,
    \end{split}
    \end{equation}
    where at (**) of \eqref{eqn:normdif}, we employ Lemma~\ref{a_est_conj_flip_2D} together with Corollary~\ref{conj_flip_eq_2D}. As in the 1-D case, we use the condition on the eigenvectors, meaning that $(\beta v_{\kappa}^T)^* v_{\kappa}^C$ is not only real but non-negative. Then,
    \begin{equation} \label{fft_bound_2D}
        \norm{\beta v_{\kappa}^T- v_{\kappa}^C}^2_F =2\left[ 1-\abs{v_{\kappa}^{T*} v_{\kappa}^C} \right].
    \end{equation}
    Now we are only left with finding a lower bound on $\abs{v_{\kappa}^{T*} v_{\kappa}^C}$, and as in the 1-D case, we will use the Davis-Kahan Theorem, and its conclusion stated in Equation (\ref{eqn:v_C_v_T_bound_1D}), which we then substitute in (\ref{fft_bound_2D}):
    \begin{equation} \label{eqn:v_C_v_T_bound_2D}
        \norm{\beta v_{\kappa}^T- v_{\kappa}^C}^2_F \leq 2(2B+1)Q\left[ 1 - \sqrt{1-\frac{Q^2 S_B(\hrho)}{\delta_{\kappa}^{2}}} \right].
    \end{equation}
    Finally, the proof follows using~\eqref{eqn:v_C_v_T_bound_2D} and~(\ref{eqn:2D_error_bound_1st_part}).
\end{proof}

\ifCLASSOPTIONcaptionsoff
  \newpage
\fi




\bibliographystyle{IEEEtran}


\begin{thebibliography}{10}
\providecommand{\url}[1]{#1}

\bibitem{bandeira2014multireference}
A.~S. Bandeira, M.~Charikar, A.~Singer, and A.~Zhu, ``Multireference alignment
  using semidefinite programming,'' in \emph{Proceedings of the 5th conference
  on Innovations in theoretical computer science}, 2014, pp. 459--470.

\bibitem{bendory2017bispectrum}
T.~Bendory, N.~Boumal, C.~Ma, Z.~Zhao, and A.~Singer, ``Bispectrum inversion
  with application to multireference alignment,'' \emph{IEEE Transactions on
  signal processing}, vol.~66, no.~4, pp. 1037--1050, 2017.

\bibitem{bandeira2023estimation}
A.~S. Bandeira, B.~Blum-Smith, J.~Kileel, J.~Niles-Weed, A.~Perry, and A.~S.
  Wein, ``Estimation under group actions: recovering orbits from invariants,''
  \emph{Applied and Computational Harmonic Analysis}, vol.~66, pp. 236--319,
  2023.

\bibitem{bendory2024transversality}
T.~Bendory, N.~Dym, D.~Edidin, and A.~Suresh, ``A transversality theorem for
  semi-algebraic sets with application to signal recovery from the second
  moment and {cryo-EM},'' \emph{arXiv preprint arXiv:2405.04354}, 2024.

\bibitem{scheres2012relion}
S.~H. Scheres, ``{RELION}: implementation of a {Bayesian} approach to cryo-{EM}
  structure determination,'' \emph{Journal of structural biology}, vol. 180,
  no.~3, pp. 519--530, 2012.

\bibitem{cheng2018single}
Y.~Cheng, ``Single-particle cryo-{EM}—how did it get here and where will it
  go,'' \emph{Science}, vol. 361, no. 6405, pp. 876--880, 2018.

\bibitem{bendory2020single}
T.~Bendory, A.~Bartesaghi, and A.~Singer, ``Single-particle cryo-electron
  microscopy: Mathematical theory, computational challenges, and
  opportunities,'' \emph{IEEE signal processing magazine}, vol.~37, no.~2, pp.
  58--76, 2020.

\bibitem{abbe2018multireference}
E.~Abbe, T.~Bendory, W.~Leeb, J.~M. Pereira, N.~Sharon, and A.~Singer,
  ``Multireference alignment is easier with an aperiodic translation
  distribution,'' \emph{IEEE Transactions on Information Theory}, vol.~65,
  no.~6, pp. 3565--3584, 2018.

\bibitem{perry2019sample}
A.~Perry, J.~Weed, A.~S. Bandeira, P.~Rigollet, and A.~Singer, ``The sample
  complexity of multireference alignment,'' \emph{SIAM Journal on Mathematics
  of Data Science}, vol.~1, no.~3, pp. 497--517, 2019.

\bibitem{bendory2022dihedral}
T.~Bendory, D.~Edidin, W.~Leeb, and N.~Sharon, ``Dihedral multi-reference
  alignment,'' \emph{IEEE Transactions on Information Theory}, vol.~68, no.~5,
  pp. 3489--3499, 2022.

\bibitem{sharon2020method}
N.~Sharon, J.~Kileel, Y.~Khoo, B.~Landa, and A.~Singer, ``Method of moments for
  3d single particle ab initio modeling with non-uniform distribution of
  viewing angles,'' \emph{Inverse Problems}, vol.~36, no.~4, p. 044003, 2020.

\bibitem{chan1988circulant_preconditioner}
T.~F. Chan, ``An optimal circulant preconditioner for toeplitz systems,''
  \emph{SIAM Journal on Scientific and Statistical Computing}, vol.~9, no.~4,
  pp. 766--771, 1988. [Online]. Available:
  \url{https://doi.org/10.1137/0909051}

\bibitem{davis_kahan1970sin_theta}
C.~Davis and W.~M. Kahan, ``The rotation of eigenvectors by a perturbation.
  iii,'' \emph{SIAM Journal on Numerical Analysis}, vol.~7, no.~1, pp. 1--46,
  1970.

\bibitem{zhao2016fast}
Z.~Zhao, Y.~Shkolnisky, and A.~Singer, ``Fast steerable principal component
  analysis,'' \emph{IEEE transactions on computational imaging}, vol.~2, no.~1,
  pp. 1--12, 2016.

\bibitem{henderson2013avoiding}
R.~Henderson, ``Avoiding the pitfalls of single particle cryo-electron
  microscopy: {Einstein} from noise,'' \emph{Proceedings of the National
  Academy of Sciences}, vol. 110, no.~45, pp. 18\,037--18\,041, 2013.

\bibitem{balanov2024einstein}
A.~Balanov, W.~Huleihel, and T.~Bendory, ``Einstein from noise: Statistical
  analysis,'' \emph{bioRxiv}, pp. 2024--07, 2024.

\end{thebibliography}

\end{document}